\newcommand{\Ric}{\mathrm{Ric}}
\newcommand{\ov}[1]{\overline{#1}}
\newcommand{\vol}{\mathrm{Vol}}
\newcommand{\diam}{\mathrm{diam}}
\newcommand{\ve}{\varepsilon}
\renewcommand{\leq}{\leqslant}
\renewcommand{\geq}{\geqslant}
\newcommand{\be}{\begin{equation}}
\newcommand{\ee}{\end{equation}}
\newcommand{\HSC}{\mathrm{HSC}}
\newcommand{\BK}{\mathrm{BK}}
\newcommand{\CP}{\mathbb{CP}}
\begin{document}
\newcounter{remark}
\newcounter{theor}
\setcounter{theor}{1}
\newtheorem{claim}{Claim}
\newtheorem{theorem}{Theorem}[section]
\newtheorem{lemma}[theorem]{Lemma}
\newtheorem{corollary}[theorem]{Corollary}
\newtheorem{conjecture}[theorem]{Conjecture}
\newtheorem{proposition}[theorem]{Proposition}
\newtheorem{question}{Question}[section]
\newtheorem{definition}[theorem]{Definition}
\newtheorem{remark}[theorem]{Remark}

\numberwithin{equation}{section}

\title[On diameter rigidity for K\"ahler manifolds]{On diameter rigidity for K\"ahler manifolds with positive holomorphic sectional curvature}

\author{Jianchun Chu}
\address[Jianchun Chu]{School of Mathematical Sciences, Peking University, Yiheyuan Road 5, Beijing 100871, People's Republic of China}
\email{jianchunchu@math.pku.edu.cn}

\author{Man-Chun Lee}
\address[Man-Chun Lee]{Department of Mathematics, The Chinese University of Hong Kong, Shatin, Hong Kong, China}
\email{mclee@math.cuhk.edu.hk}

\author{Jintian Zhu}
\address[Jintian Zhu]{Institute for Theoretical Sciences, Westlake University, 600 Dunyu Road, 310030, Hangzhou, Zhejiang, People's Republic of China}
\email{zhujintian@westlake.edu.cn}

\renewcommand{\subjclassname}{\textup{2020} Mathematics Subject Classification}
\subjclass[2020]{Primary 53C55; Secondary 32Q10, 53C24}

\date{\today}

\begin{abstract}
In this paper, we establish some diameter rigidity for K\"ahler manifolds with positive holomorphic sectional curvature.
\end{abstract}

\maketitle

\section{Introduction}

Comparison theorems are a fundamental tool for understanding the behavior of a manifold $(M^m,g)$ with a curvature lower bound. In the context of positive Ricci curvature, $\Ric(g)\geq (m-1)g$, the classical Bonnet–Myers theorem states that the diameter of $(M^m,g)$  is bounded above by that of the standard sphere $(\mathbb{S}^m,g_{\mathbb{S}^{m}})$, while Cheng’s diameter rigidity theorem shows that this optimal diameter estimate is attained only if $(M^m,g)$  is isometric to $(\mathbb{S}^m,g_{\mathbb{S}^{m}})$.

\begin{theorem}[Bonnet-Myers, Cheng]\label{Riemannian diameter comparison}
Let $(M,g)$ be a complete Riemannian manifold with real dimension $m$. If $\Ric(g)\geq(m-1)g$, then we have
\[
\diam(M^m,g) \leq \diam(\mathbb S^m,g_{\mathbb S^m}),
\]
and the equality holds if and only if $(M,g)$ is isometric to the standard sphere $(\mathbb S^m,g_{\mathbb S^m})$.
\end{theorem}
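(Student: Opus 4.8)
\emph{Proof proposal.} The plan is to prove the two assertions separately: the diameter bound (Bonnet--Myers) is a short second-variation computation, while the equality case (Cheng) is a rigidity argument built on the volume comparison theorem (recall $\diam(\mathbb S^m,g_{\mathbb S^m})=\pi$). For the diameter bound I would argue by contradiction. Suppose $\gamma\colon[0,L]\to M$ is a unit-speed minimizing geodesic from $p$ to $q$ with $L=\mathrm{length}(\gamma)>\pi$. Choose a parallel orthonormal frame $e_1(t),\dots,e_{m-1}(t)$ of $\dot\gamma(t)^{\perp}$ along $\gamma$ and set $V_i(t)=\sin(\pi t/L)\,e_i(t)$, so that $V_i(0)=V_i(L)=0$. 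The index form satisfies
\[
I(V_i,V_i)=\int_0^L\Big(\tfrac{\pi^2}{L^2}\cos^2(\pi t/L)-\sin^2(\pi t/L)\,\langle R(e_i,\dot\gamma)\dot\gamma,e_i\rangle\Big)\,dt ,
\]
and summing over $i$ with $\sum_i\langle R(e_i,\dot\gamma)\dot\gamma,e_i\rangle=\Ric(\dot\gamma,\dot\gamma)\geq m-1$ yields
\[
\sum_{i=1}^{m-1}I(V_i,V_i)\leq(m-1)\,\frac{L}{2}\Big(\frac{\pi^2}{L^2}-1\Big)<0 .
\]
Hence $I(V_i,V_i)<0$ for some $i$, contradicting the minimality of $\gamma$; therefore $\diam(M,g)\leq\pi$.

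For the rigidity, assume $\diam(M,g)=\pi$ and fix $p,q\in M$ with $d(p,q)=\pi$. For each $\rho\in(0,\pi)$ the triangle inequality gives $B(p,\rho)\cap B(q,\pi-\rho)=\emptyset$, hence $\vol(B(p,\rho))+\vol(B(q,\pi-\rho))\leq\vol(M)$. Writing $V_0(r)=\vol(B_{\mathbb S^m}(r))$, the Bishop--Gromov relative volume comparison (valid since $\Ric(g)\geq(m-1)g$), applied at $p$ and at $q$ and using $\vol(M)=\vol(B(p,\pi))=\vol(B(q,\pi))$ (the distance sphere $\{d(p,\cdot)=\pi\}$, lying in the cut locus of $p$, has measure zero), yields $\vol(B(p,\rho))\geq\frac{V_0(\rho)}{V_0(\pi)}\vol(M)$ and $\vol(B(q,\pi-\rho))\geq\frac{V_0(\pi-\rho)}{V_0(\pi)}\vol(M)$. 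Since $V_0(\rho)+V_0(\pi-\rho)=V_0(\pi)=\vol(\mathbb S^m)$, adding these forces every inequality to be an equality; in particular $\vol(B(p,\rho))/V_0(\rho)$ is constant in $\rho$, and letting $\rho\to0^+$ shows this constant is $1$. Thus $\vol(B(p,\rho))=V_0(\rho)$ for all $\rho<\pi$, and the equality case of Bishop--Gromov identifies $\exp_p$ on the $\rho$-ball with its model counterpart, so that $g=\exp_p^{*}\!\big(dr^2+\sin^2 r\,g_{\mathbb S^{m-1}}\big)$ on $\{d(p,\cdot)<\pi\}$; letting $\rho\to\pi$ and using compactness of $M$ exhibits a Riemannian isometry $(M,g)\cong(\mathbb S^m,g_{\mathbb S^m})$.

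An equivalent route, essentially Cheng's original, replaces the volume estimate by the eigenvalue comparison: using the Laplacian comparison $\Delta\,d(p,\cdot)\leq(m-1)\cot d(p,\cdot)$, the function equal to $\cos d(p,\cdot)$ on $\{d(p,\cdot)\leq\tfrac\pi2\}$ and to $-\cos d(q,\cdot)$ on $\{d(q,\cdot)\leq\tfrac\pi2\}$ is an admissible test function showing $\lambda_1(M,g)\leq m$, which together with the Lichnerowicz bound $\lambda_1(M,g)\geq m$ and Obata's rigidity theorem again forces $(M,g)$ to be the round sphere.

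The Bonnet--Myers half is entirely routine. The main obstacle is the rigidity half: extracting a \emph{global} isometry from the infinitesimal information produced by the equality case of the comparison theorem. Concretely, one must check that the rigidity in the Bishop--Gromov inequality propagates along every minimizing ray from $p$ (forcing $\Ric(\dot\gamma,\dot\gamma)\equiv m-1$ and the geodesic spheres of $p$ to be umbilic with principal curvatures $\cot r$), that $\exp_p$ stays a diffeomorphism out to radii arbitrarily close to $\pi$ with pulled-back metric exactly $dr^2+\sin^2 r\,g_{\mathbb S^{m-1}}$, and that the resulting round metric on $\{d(p,\cdot)<\pi\}$ extends smoothly across the cut locus $\{d(p,\cdot)=\pi\}$ (which the argument reveals to be a single point) — this is precisely the content of the equality case of the volume comparison theorem, applied here at the maximal radius.
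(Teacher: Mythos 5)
You should note at the outset that the paper itself offers no proof of Theorem \ref{Riemannian diameter comparison}: it is quoted as classical background (Bonnet--Myers and Cheng) to motivate the K\"ahler results, so there is no in-paper argument to compare against. Judged on its own, your proposal is the standard proof and is essentially correct. The diameter bound via the index form with the fields $V_i=\sin(\pi t/L)\,e_i$ is fine (and is, incidentally, the same second-variation device the paper adapts in its Lemma \ref{Lem: curvature rigidity}, where the single field $\sin(\sqrt{2}s)\,J\gamma'$ replaces the frame because only $\HSC\geq 2$ is assumed). The rigidity half is the usual Bishop--Gromov/maximal-volume route: the disjointness of $B(p,\rho)$ and $B(q,\pi-\rho)$ plus relative volume comparison forces $\vol(B(p,\rho))=V_0(\rho)$ for all $\rho<\pi$, hence $\vol(M)=\vol(\mathbb{S}^m)$, and the equality case of Bishop's inequality gives the round warped-product metric on $\{d(p,\cdot)<\pi\}$; you correctly identify that the only remaining work is extending this across the cut locus of $p$ (which collapses to the single point $q$), and that this is exactly where the equality analysis at maximal radius is invoked, so the sketch is acceptable. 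One small caveat on your aside: the glued function $\cos d(p,\cdot)$ on $\{d(p,\cdot)\leq\pi/2\}$ and $-\cos d(q,\cdot)$ on $\{d(q,\cdot)\leq\pi/2\}$, extended by zero, need not have zero mean, so to get $\lambda_1\leq m$ one should either take a suitable linear combination of the two pieces orthogonal to constants or argue through Cheng's Dirichlet eigenvalue comparison on the two disjoint half-balls; since this route is offered only as an alternative, it does not affect your main argument.
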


K\"ahler geometry is an important branch of mathematics, regarded as the intersection of Riemannian geometry, complex geometry, and symplectic geometry. The existence of a compatible complex structure imposes stronger constraints on the topology of the underlying manifold. In contrast to round spheres, which serve as the Riemannian space forms, the model space with positive curvature in the K\"ahler  category is the complex projective space $(\CP^n,\omega_{\CP^n})$ equipped with the Fubini–Study metric. This metric has constant holomorphic sectional curvature.

The understanding of comparison geometry in Riemannian geometry is fairly complete, whereas its K\"ahler counterpart remains mysterious and unclear. Indeed, there are various natural notions of curvature in K\"ahler geometry. The positivity of each notion of curvature encodes different aspects of the complex structure. It is both tempting and interesting to explore the geometric implications of different curvature constraints. Even in the case of positive Ricci curvature, basic comparisons such as volume and diameter appear to be more subtle than in the Riemannian setting.

Perhaps surprisingly, the K\"ahler analogue of volume comparison was established only recently by Zhang \cite{Zhang22}, using techniques arising from recent developments in K-stability theory, which are completely different from those used in traditional Riemannian geometry. The diameter comparison is even more subtle. In fact, Liu–Yuan observed \cite[Remark 1]{LY18} that the $n$-fold product $(\CP^1)^n:=\CP^1\times \cdots\times \CP^1$, equipped with the Kähler metric 
$$\omega=\frac{2}{n+1}(\omega_{\CP^1}+\cdots +\omega_{\CP^1})$$
shares the same Ricci curvature lower bound as $(\CP^n,\omega_{\CP^n})$ but has a larger diameter.  This failure, however, can be ruled out by imposing a stronger positivity condition in Kähler geometry: positive bisectional curvature, $\mathrm{BK}\geq 1$. To summarize, we recall

\begin{definition}
Let $(M,\omega)$ be a K\"ahler manifold and $u,v$ be two unitary $(1,0)$-vectors.
\begin{itemize}\setlength{\itemsep}{1mm}
\item Holomorphic bisectional curvature: $\BK(u,v)=R(u,\ov{u},v,\ov{v})$.
\item Holomorphic sectional curvature: $\HSC(u)=R(u,\ov{u},u,\ov{u})$.
\end{itemize}
Given any real constant $K$, we say that
\begin{itemize}\setlength{\itemsep}{1mm}
\item holomorphic bisectional curvature is bounded from below by $K$, denoted by $\BK\geq K$, if $\BK(u,v)\geq K(1+|\langle u,\ov{v}\rangle|^{2})$ for any unitary $(1,0)$-vectors $u,v$;
\item holomorphic sectional curvature is bounded from below by $2K$, denoted by $\HSC\geq 2K$, if $\HSC(u)\geq 2K$ for any unitary $(1,0)$-vector $u$.
\end{itemize}
\end{definition}

he case when the complex dimension is $n$ and $K=1$ corresponds to the Kähler space form  $(\CP^n,\omega_{\CP^n})$. Based on this model space form, Tsukamoto \cite{Tsukamoto57} first proved that the diameter of a compact K\"ahler manifold with $\HSC\geq 2$ (holds automatically when $\BK\geq 1$)  cannot exceed
\[
\diam(\CP^n,\omega_{\CP^n}) = \frac{\pi}{\sqrt 2}
\]
using a standard variational argument. In sharp contrast, rigidity is much more challenging, even under the strong curvature condition $\mathrm{BK}\geq 1$. By the resolution of Frankel’s conjecture \cite{Frankel61}, independently by Siu–Yau \cite{SY80} via differential geometry and by Mori \cite{Mori79} via algebraic geometry (the more general Hartshorne conjecture was proved; see \cite{Hartshorne70}), $M^n$ must be biholomorphic to $\mathbb{CP}^n$. Building on this, rigidity was established only very recently by Datar–Seshadri \cite{DS23}, who introduced and used a monotonicity formula for a function arising from Lelong numbers of positive currents on  $\mathbb{CP}^n$; see also the earlier works of Liu–Yuan \cite{LY18} and Tam–Yu \cite{TY12}.

\begin{theorem}[Tsukamoto \cite{Tsukamoto57}, Datar-Seshadri \cite{DS23}]
Let $(M,\omega)$ be a compact $n$-dimensional K\"ahler manifold with $\BK\geq1$. Then we have
\[
\diam(M,\omega) \leq \frac{\pi}{\sqrt 2},
\]
and the equality holds if and only if $(M,\omega)$ is biholomorphically isometric to $(\mathbb{CP}^{n},\omega_{\mathbb{CP}^{n}})$.
\end{theorem}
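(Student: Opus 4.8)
The plan is to prove the diameter bound by a Bonnet--Myers type second variation argument, and then to treat the rigidity by combining the resolution of Frankel's conjecture with a monotonicity argument for positive currents on $\CP^{n}$ in the spirit of Datar--Seshadri. For the bound, let $\gamma\colon[0,L]\to M$ be a unit-speed minimizing geodesic with $L=d(p,q)$ and put $E:=J\gamma'$. Since $g$ is K\"ahler, $\nabla J=0$, so $E$ is a parallel unit vector field along $\gamma$, and $g(E,\gamma')=g(J\gamma',\gamma')=0$. The Riemannian sectional curvature of the $J$-invariant plane spanned by $\gamma'$ and $E$ equals $\HSC$ of the corresponding unit $(1,0)$-vector, which is $\geq2$ because $\BK\geq1$ (and $\BK\geq1$ also forces $\Ric>0$). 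Inserting the admissible variation field $V(s):=\sin(\pi s/L)\,E(s)$ into the index form of $\gamma$ gives
\[
0\leq I(V,V)=\int_0^L\Big(\tfrac{\pi^2}{L^2}\cos^2\tfrac{\pi s}{L}-K(\gamma',E)\sin^2\tfrac{\pi s}{L}\Big)\,ds\leq\frac{L}{2}\Big(\frac{\pi^2}{L^2}-2\Big),
\]
so $L\leq\pi/\sqrt2$, and taking the supremum over $p,q$ gives $\diam(M,\omega)\leq\pi/\sqrt2$.

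Now assume equality, attained by $p,q$ and a minimizing geodesic $\gamma$ of length $\pi/\sqrt2$. Since $\BK\geq1$ forces $\Ric>0$, positivity of the holomorphic bisectional curvature together with the resolution of Frankel's conjecture (Mori; Siu--Yau) shows $M$ is biholomorphic to $\CP^{n}$; by Hodge theory $H^{1,1}(M;\mathbb{R})=\mathbb{R}\,[\omega_{\CP^{n}}]$, hence $[\omega]=a\,[\omega_{\CP^{n}}]$ for some $a>0$. Equality in the display forces $K(\gamma',J\gamma')\equiv2$ along $\gamma$ and forces $V(s)=\sin(\sqrt2\,s)\,J\gamma'(s)$ to lie in the kernel of the index form, i.e.\ to be a Jacobi field; in particular $q$ is the first conjugate point of $p$ along $\gamma$, and $R(J\gamma',\gamma')\gamma'=2J\gamma'$ along $\gamma$. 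This is the K\"ahler counterpart of the single-direction rigidity that Cheng's theorem globalizes using the full Ricci bound; here only the $J$-direction along $\gamma$ is pinned down, which is precisely why the Riemannian argument does not transfer.

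The remaining --- and main --- step is global: to propagate the rigidity from $\gamma$ to all of $M$. Following Datar--Seshadri, the idea is to attach to $(M,\omega)\cong\CP^{n}$ a closed positive $(1,1)$-current $T$ with $[T]=a\,[\omega_{\CP^{n}}]$, manufactured from the equality case (the conjugate and focal structure of $\exp_p$ and the family of almost-minimizing geodesics issuing from $p$), whose Lelong-type density is extremal at the endpoints of $\gamma$; one then applies a monotonicity formula --- the heart of their argument --- for the average of a local potential of $T$ (equivalently, of that Lelong density) over the geodesic spheres $\partial B(p,r)$. The endpoint values being pinned at $r\to0$ and at $r=\pi/\sqrt2$ by the equality case forces the monotone quantity to be affine in $r$; by the equality cases of the underlying plurisubharmonic/Lelong comparison this forces $T$ to be the current of integration along a hyperplane, and unwinding the construction shows $\omega$ has constant holomorphic sectional curvature $2$ along every geodesic from $p$. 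A bootstrap over base points (in the model every point has an antipodal locus at distance $\pi/\sqrt2$, so the set where rigidity holds spreads) then yields $\HSC\equiv2$ on $M$, and by the classification of K\"ahler manifolds of constant holomorphic sectional curvature (Hawley; Igusa) $(M,\omega)$ is holomorphically isometric to $(\CP^{n},\omega_{\CP^{n}})$. I expect the construction of $T$ and the proof of the monotonicity formula to be the main obstacle: unlike in Cheng's theorem, where the full Ricci bound feeds directly into Riemannian comparison, here only one curvature direction along geodesics is controlled, and the missing information must be recovered by pluripotential-theoretic means.
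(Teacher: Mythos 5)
A preliminary remark on provenance: the paper does not prove this statement at all --- it is quoted as background, with the inequality attributed to Tsukamoto and the equality case to Datar--Seshadri --- so the only in-paper analogues to compare with are Lemma \ref{Lem: curvature rigidity} (the same second variation computation) and the endgame of Theorem \ref{Thm: main1}. Your first half is correct and standard: $\BK\geq 1$ gives $\HSC\geq 2$, the plane $\mathrm{span}(\gamma',J\gamma')$ is $J$-invariant with sectional curvature $\HSC(e_\gamma)\geq 2$, $J\gamma'$ is parallel since $\nabla J=0$, and the index form with $V=\sin(\pi s/L)J\gamma'$ yields $L\leq \pi/\sqrt{2}$; this is exactly the mechanism of Lemma \ref{Lem: curvature rigidity}. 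The identification $M\cong\CP^n$ via Mori/Siu--Yau and $[\omega]=a[\omega_{\CP^n}]$ is also fine.

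The genuine gap is that everything after that point --- which is the entire content of the Datar--Seshadri theorem --- is deferred rather than proved. Concretely, the whole problem reduces to showing $a=1$ (equivalently, excluding $[\omega]=k^{-1}[\omega_{\CP^n}]$ for integers $k\geq 2$; compare the volume hypothesis the present paper must impose in Theorem \ref{Thm: main1} precisely because it cannot exclude this under $\HSC\geq 2$ alone), and then concluding $\HSC\equiv 2$. Your proposal supplies no construction of the current $T$, no statement (let alone proof) of the monotonicity formula, and no argument for the claimed conclusions: there is no mechanism forcing the ``monotone quantity'' to be affine, nothing forcing $T$ to be integration along a hyperplane, and the assertion that ``unwinding the construction'' gives constant $\HSC$ along every geodesic from $p$, followed by a ``bootstrap over base points,'' has no support --- the second variation equality pins down curvature only in the single plane $\mathrm{span}(\gamma',J\gamma')$ along one geodesic, as you yourself note. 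In the actual argument the current comes explicitly from the distance functions $r_p,r_q$ via the complex Hessian comparison of Tam--Yu (Theorem \ref{Tam-Yu comparison inequality}), where $\BK\geq 1$ is used at \emph{every} point, not just along the extremal geodesic, and the Lelong-number comparison on $\CP^n$ is what rules out $a=k^{-1}$, $k\geq 2$. Moreover, once $[\omega]=[\omega_{\CP^n}]$ is known, the efficient finish is not a geodesic-by-geodesic propagation but the integral identity $\int_M \mathrm{R}\,\omega^n=n(n+1)\vol(\CP^n,\omega_{\CP^n})$ combined with the pointwise bound $\mathrm{R}\geq n(n+1)$ from Berger averaging (Lemma \ref{HSC R}), forcing $\HSC\equiv 2$ and hence the isometry with $(\CP^n,\omega_{\CP^n})$ --- exactly as in the proof of Theorem \ref{Thm: main1}. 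As written, your rigidity part is a plan whose central steps are acknowledged to be missing, so it does not constitute a proof of the equality case.
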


In comparison with the condition considered (and used) by Tsukamoto \cite{Tsukamoto57}, Xiong and Yang asked whether diameter rigidity holds under the weaker positivity condition $\HSC\geq 2$.

\begin{question}[Xiong-Yang, {\cite[Problem 3.5]{XY24}}]\label{Xiong-Yang question}
Let $(M,\omega)$ be a compact $n$-dimensional K\"ahler manifold with $\HSC\geq2$. If
\[
\diam(M,\omega) = \frac{\pi}{\sqrt 2},
\]
then $(M,\omega)$ is biholomorphically isometric to $(\mathbb{CP}^n,\omega_{\mathbb{CP}^n})$.
\end{question}

In this work, we provide partial answers to the rigidity problem under some additional assumptions, motivated by the works \cite{LY18, TY12}. In our first result, we prove an analogue of a result by Liu–Yuan \cite{LY18}, showing diameter rigidity when, in addition, the volume of $(\mathbb{CP}^n,\omega)$ is not an integer multiple of that of $(\mathbb{CP}^n, \omega_{\mathbb{CP}^n})$. In particular, we relax the curvature assumption in \cite{LY18}.

\begin{theorem}\label{Thm: main1}
Let $\omega$ be a K\"ahler metric on $\mathbb{CP}^{n}$ with $\HSC\geq2$. If
\[
\mathrm{diam}(\mathbb{CP}^{n},\omega) = \frac{\pi}{\sqrt 2}
\]
and
\[
\vol(\mathbb{CP}^{n},\omega) \neq k^{-n}\cdot\vol(\mathbb{CP}^n,\omega_{\mathbb{CP}^n}) \mbox{ for all integers }k\geq 2,
\]
then $(\mathbb{CP}^{n},\omega)$ is biholomorphically isometric to $(\mathbb{CP}^n,\omega_{\mathbb{CP}^n})$.
\end{theorem}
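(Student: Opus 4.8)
The plan is to reduce the whole statement to a computation of the Kähler class of $\omega$ and to show that the diameter equality forces this class to be a \emph{reciprocal-integer} multiple of the Fubini--Study class, after which the volume hypothesis pins it down exactly. Since $H^{2}(\CP^{n};\mathbb Z)\cong\mathbb Z$, write $[\omega]=\lambda[\omega_{\CP^{n}}]$ with $\lambda>0$; then $\vol(\CP^{n},\omega)=\lambda^{n}\vol(\CP^{n},\omega_{\CP^{n}})$, so the hypothesis says precisely $\lambda\neq 1/k$ for every integer $k\ge 2$. By the Berger averaging identity the scalar curvature is a fixed positive constant times the average of $\HSC$ over the unit $(1,0)$--vectors, so $\HSC\ge 2$ gives $s_{\omega}\ge 2n(n+1)$ pointwise, with equality at a point iff $\HSC\equiv 2$ there. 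Integrating and using that $\int_{\CP^{n}}s_{\omega}\,\dvol_{\omega}$ is a Chern--Weil quantity (depending only on $c_{1}(\CP^{n})$ and $[\omega]$, hence scaling like $\lambda^{n-1}$ while $\vol$ scales like $\lambda^{n}$) yields $\lambda\le 1$, with $\lambda=1$ iff $\HSC\equiv 2$ everywhere; a Kähler metric of constant holomorphic sectional curvature $2$ on the compact simply connected manifold $\CP^{n}$ is biholomorphically isometric to $(\CP^{n},\omega_{\CP^{n}})$. Thus it suffices to prove $\lambda\in\{1/k:k\in\mathbb Z_{\ge 1}\}$; combined with the hypothesis this gives $\lambda=1$ and hence the theorem.

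\emph{Rigidity along maximal geodesics.} Fix $p,q$ with $d(p,q)=D:=\pi/\sqrt 2$. Along any unit-speed minimizing geodesic $\gamma:[0,D]\to\CP^{n}$ from $p$ to $q$, Tsukamoto's test field $V(t)=\sin(\sqrt 2\,t)\,J\dot\gamma(t)$ has index form $I(V,V)\le 0$ because $\HSC\ge 2$, while $I(V,V)\ge 0$ because $\gamma$ minimizes; so $I(V,V)=0$. Equality forces $\HSC(\dot\gamma)\equiv 2$ and in fact $R(J\dot\gamma,\dot\gamma)\dot\gamma=2J\dot\gamma$ along $\gamma$, and forces $V$ to be a Jacobi field, so $q$ is conjugate to $p$ along $\gamma$ at distance exactly $D$ with collapsing direction $J\dot\gamma$. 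Since that direction is $J$-invariant, $d\exp_{p}$ at radius $D$ has a $J$-invariant kernel along each ray, so the cut set $Z_{p}:=\{r_{p}=D\}$ (with $r_{p}=d(p,\cdot)$) is a complex-analytic subvariety of $\CP^{n}$ of complex codimension one. Every point of $Z_{p}$ is at maximal distance from $p$, so the same rigidity holds along every minimizing geodesic from $p$ to $Z_{p}$.

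\emph{The rigid potential identity.} Using this radial and $J$-radial rigidity in a Riccati/Hessian-comparison argument for $r_{p}$, I would upgrade to the identity
\[
\omega\;=\;-\,\ddbar\log\cos^{2}\!\Big(\tfrac{r_{p}}{\sqrt 2}\Big)\qquad\text{on }\CP^{n}\setminus Z_{p},
\]
i.e.\ $-\log\Psi_{p}$ is a global Kähler potential for $\omega$ away from $Z_{p}$, where $\Psi_{p}:=\cos^{2}(r_{p}/\sqrt 2)$ is smooth and positive there, equals $1$ only at $p$, and vanishes exactly on $Z_{p}$ (in the Fubini--Study case this is the familiar $\Psi_{p}=|z_{0}|^{2}/|z|^{2}$). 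Consequently the sublevel sets $\{\Psi_{p}>\e\}$ exhaust $\CP^{n}\setminus Z_{p}$ by strictly pseudoconvex domains shrinking to the point $p$, so $\CP^{n}\setminus Z_{p}$ is biholomorphic to $\C^{n}$ and $Z_{p}$ is a hyperplane, whence $\deg Z_{p}=1$. This is the hard part, and the step where the diameter \emph{equality} — not merely the pointwise bound $\HSC\ge 2$ — must be fully exploited, since $\HSC\ge 2$ gives no a priori control on the non-holomorphic $2$-planes tangent to the geodesic spheres; this is exactly the loss incurred in weakening $\BK\ge 1$ to $\HSC\ge 2$.

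\emph{Integrality and conclusion.} By the previous step the closed positive current $T_{p}:=\omega+\ddbar\log\Psi_{p}$ vanishes on $\CP^{n}\setminus Z_{p}$, hence is supported on $Z_{p}$. Analyzing $\Psi_{p}$ on a $1$-dimensional slice transverse to the smooth locus of $Z_{p}$ — using that $\Psi_{p}$ is a function of the $\omega$-distance $r_{p}$ and that $-\log\Psi_{p}$ is a Kähler potential — forces $\cos(r_{p}/\sqrt 2)$ to vanish to order $1/k$ along $Z_{p}$ for some positive integer $k$ (equivalently $\omega$ acquires a conical singularity of angle $2\pi/k$ along $Z_{p}$, $k$ being the local multiplicity of $\exp_{p}$ on the sphere of radius $D$). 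Hence $T_{p}=\tfrac{2\pi}{k}[Z_{p}]$, and taking cohomology classes with $\deg Z_{p}=1$ gives $[\omega]=\tfrac{2\pi}{k}[\omega_{\CP^{n}}]$, i.e.\ $\lambda=1/k$. The volume hypothesis excludes $k\ge 2$, so $\lambda=1$, and the first paragraph concludes. The main obstacle is the potential identity; the volume assumption is invoked only at the very end, to rule out $k\ge 2$.
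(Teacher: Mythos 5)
There is a genuine gap, and it sits exactly where you flag it: the ``rigid potential identity'' paragraph. Everything there is asserted, not proved, and the assertions do not follow from the hypotheses. First, the second-variation rigidity ($\HSC(\dot\gamma)\equiv 2$, Jacobi field, conjugate point at distance $\pi/\sqrt2$) holds only along minimizing geodesics of length exactly $\pi/\sqrt2$; the hypothesis gives you a single pair $(p,q)$ realizing this distance, so a priori the set $Z_{p}=\{r_{p}=\pi/\sqrt2\}$ could be just the point $q$, and there is no reason it is a complex-analytic hypersurface (you are also conflating $\{r_p=D\}$ with $\mathrm{Cut}(p)$). Second, the upgrade to $\omega=-\ddbar\log\cos^{2}(r_{p}/\sqrt2)$ off $Z_p$ would require a complex Hessian comparison for $r_{p}$ on all of $\CP^{n}\setminus\mathrm{Cut}(p)$; under $\HSC\ge 2$ alone the Tam--Yu comparison is only available where one has the radial bisectional bound $\BK(e_{1},u)\ge 1+|\langle e_{1},\bar u\rangle|^{2}$, and Yang's lemma supplies that bound only at points lying on extremal geodesics where $\HSC$ attains its minimum $2$ in the radial direction --- i.e.\ only along length-$\pi/\sqrt2$ minimizers, not along every geodesic from $p$. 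So the central step of your plan is not just unproved but currently unprovable from the stated inputs, and the subsequent biholomorphism $\CP^{n}\setminus Z_{p}\cong\C^{n}$, $\deg Z_{p}=1$, and the conical-order-$1/k$ analysis all rest on it.

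For contrast, the paper sidesteps all of this: from the single extremal pair $(p,q)$ it constructs (via a conformal perturbation of the metric producing a sweep-out by minimizing geodesics, Proposition \ref{totally gedesic CP1}) a totally geodesic, holomorphically embedded rational curve $L\cong\CP^{1}$ through $p$ and $q$, isometric to the round sphere of radius $1/\sqrt2$. Then $\int_{L}\omega=\vol(\CP^{1},\omega_{\CP^{1}})$, and if $L$ has degree $k$ this forces $[\omega]=k^{-1}[\omega_{\CP^{n}}]$ directly --- which is precisely the integrality statement $\lambda\in\{1/k\}$ you were after, obtained without any control of $r_{p}$ away from the extremal geodesics. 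Your opening reduction (Berger averaging plus the Chern--Weil scalar-curvature integral, with equality forcing $\HSC\equiv2$) and your closing use of the volume hypothesis to rule out $k\ge2$ do match the paper's bookends; what is missing is a valid replacement for the extreme-$\CP^{1}$ construction in the middle.
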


Our second result is motivated by the work of Tam–Yu \cite{TY12}, who considered the scenario where the optimal diameter is achieved by two complex sub-manifolds instead of two points. In this case, Tam–Yu \cite{TY12} established rigidity under the condition $\mathrm{BK}\geq 1$, prior to the works of Liu–Yuan \cite{LY18} and Datar–Seshadri \cite{DS23}. In this regard, we provide a partial generalization of Tam–Yu’s diameter-type rigidity theorem in the case of  $\HSC\geq 2$.

\begin{theorem}\label{Thm: main2}
Let $(M,\omega)$ be a compact $n$-dimensional K\"ahler manifold with $\HSC\geq2$. If
\[
d(p,Q) = \frac{\pi}{\sqrt{2}},
\]
where $p$ is a point and $Q$ is a $(n-1)$-dimensional complex submanifold, then $(M,\omega)$ is biholomorphically isometric to $(\mathbb{CP}^n,\omega_{\mathbb{CP}^n})$.
\end{theorem}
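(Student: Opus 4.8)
The plan is to combine a second-variation argument along the minimizing geodesics realizing $d(p,Q)$ with a structural analysis: the equality case will produce a rigid Jacobi field, this will force $M$ to be swept out by totally geodesic holomorphic spheres of constant curvature $2$ through $p$, and the resulting configuration will be identified with the standard pencil of projective lines through a point of $\CP^n$.

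\textbf{Step 1 (reductions and index-form rigidity).} By Tsukamoto's theorem $\diam(M,\omega)\le\pi/\sqrt2$, so $d(p,Q)=\pi/\sqrt2$ forces equality in the diameter bound and, moreover, every point of $Q$ is at distance exactly $\pi/\sqrt2$ from $p$; any minimizing geodesic $\gamma\colon[0,\pi/\sqrt2]\to M$ from $p$ to $Q$ meets $Q$ orthogonally at $q:=\gamma(\pi/\sqrt2)$. Fix such a $\gamma$. Since $Q$ is a complex submanifold, $J\gamma'(\pi/\sqrt2)$ is again normal to $Q$, so $V(t):=\sin(\sqrt2\,t)\,J\gamma'(t)$ is an admissible variation field for the $p$-to-$Q$ length functional ($V(0)=V(\pi/\sqrt2)=0$). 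Using that $J$ is parallel and that $\langle R(J\gamma',\gamma')\gamma',J\gamma'\rangle\ge2$ (this is the content of $\HSC\ge2$), a direct computation gives $I(V,V)\le\int_0^{\pi/\sqrt2}\bigl(2\cos^2(\sqrt2\,t)-2\sin^2(\sqrt2\,t)\bigr)\,dt=0$, and minimality forces $I(V,V)=0$. Hence $\langle R(J\gamma',\gamma')\gamma',J\gamma'\rangle\equiv2$ along $\gamma$; $V$ lies in the nullspace of the index form, so it is a Jacobi field, and comparing with $V''+R(V,\gamma')\gamma'=0$ gives $R(J\gamma',\gamma')\gamma'=2\,J\gamma'$ along $\gamma$; in particular $q$ is conjugate to $p$ along $\gamma$ with conjugate field a multiple of $J\gamma'$. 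Inserting $R(J\gamma',\gamma')\gamma'=2J\gamma'$ into the Riccati equation for $\Hess r$ along $\gamma$ ($r=d(p,\cdot)$) shows that $J\nabla r$ is an eigenvector of $\Hess r$ with eigenvalue $\sqrt2\cot(\sqrt2\,r)$; and a Cauchy--Schwarz estimate in the same Riccati equation yields, from $\HSC\ge2$ alone, the one-sided comparison $\Hess r(J\nabla r,J\nabla r)\le\sqrt2\cot(\sqrt2\,r)$ at every smooth point of $r$.

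\textbf{Step 2 (totally geodesic rational curves).} Let $\gamma=\gamma_v$ be a minimizing geodesic from $p$ to $q\in Q$, $v=\gamma'(0)$. The variation field along $\gamma$ of the rotated family $\theta\mapsto\gamma_{e^{\sqrt{-1}\theta}v}$ is the Jacobi field with initial data $(0,Jv)$, which by Step 1 is $\tfrac1{\sqrt2}\sin(\sqrt2\,t)\,J\gamma'(t)$ --- everywhere proportional to $J\gamma'$. Therefore the surface $S_v:=\exp_p\!\bigl(\{w\in\mathbb Cv:\ |w|\le\pi/\sqrt2\}\bigr)$ has complex tangent planes $\mathbb C\gamma'(t)$ along $\gamma$; computing the second fundamental form of $S_v$ (using that the generating Jacobi field is a multiple of $J\gamma'$ and that $J$ is parallel) shows $S_v$ is totally geodesic. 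By the Gauss equation together with $\langle R(J\gamma',\gamma')\gamma',J\gamma'\rangle\equiv2$, $S_v$ is a totally geodesic holomorphic $2$-sphere of constant Gauss curvature $2$, hence isometric to $(\CP^1,\omega_{\CP^1})$; it contains $p$ and its antipode $\gamma_v(\pi/\sqrt2)\in Q$, meets $Q$ orthogonally there, and $S_v\cap Q$ is that single point. (One must also verify that every geodesic in the rotated family is minimizing to $Q$, so that the rigidity of Step 1 applies to all of them; this is an open-and-closed argument for the relevant set of unit directions at $p$.)

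\textbf{Step 3 (globalization and conclusion).} One then shows that $\exp_p$ restricts to a diffeomorphism of the open ball $B(0,\pi/\sqrt2)\subset T_pM$ onto $M\setminus Q$ whose restriction to $\partial B(0,\pi/\sqrt2)\cong S^{2n-1}$ is the Hopf fibration onto $Q$ (the fibers being the $U(1)$-orbits); equivalently, the spheres $S_v$ foliate $M\setminus\{p\}$ over $Q$ with fibers $\mathbb C$, all through $p$. This identifies $Q$ with $\CP^{n-1}$ and exhibits $M$ as the standard compactification of the tautological line bundle over $\CP^{n-1}$, i.e.\ $M\cong\CP^n$ biholomorphically. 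A further argument then identifies the metric: since the spheres $S_v$ realize $\HSC=2$ in every complex direction at $p$, and --- transporting the configuration to the points of $Q$, each of which is at distance $\pi/\sqrt2$ from a complex hypersurface by the analogue of the hypothesis --- one obtains $\HSC\equiv2$ on all of $M$, whence $(M,\omega)$ is biholomorphically isometric to $(\CP^n,\omega_{\CP^n})$ by the classification of complex space forms.

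\textbf{The main obstacle} is Step 3: converting the directional curvature rigidity of Steps 1--2 into global control of the cut and conjugate loci of $p$ --- that every length-$\pi/\sqrt2$ minimizing geodesic from $p$ ends on $Q$, that minimizing geodesics from $p$ to points of $Q$ are unique, and that the attaching map $S^{2n-1}\to Q$ is the Hopf map. This is precisely where the complex hypersurface hypothesis is essential (the corresponding statement with two points is the open question of Xiong--Yang), and the difficulty is that $\HSC\ge2$ only controls the ``$J\nabla r$-direction'' of the Hessian comparison, so the rest of the geometry of $Q$ must be brought in by complex-analytic means.
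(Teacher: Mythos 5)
Your Steps 1--2 reproduce, in outline, the paper's Lemma \ref{Lem: curvature rigidity} and the construction of the extreme rational curve, but the parenthetical remark at the end of Step 2 --- ``one must also verify that every geodesic in the rotated family is minimizing to $Q$; this is an open-and-closed argument'' --- is not a routine verification: it is precisely the crux of the problem and the main technical contribution of the paper (Proposition \ref{totally gedesic CP1}, proved via Proposition \ref{Prop: rough passing geodesic} and Lemma \ref{Lem: point pass geodesic}). Closedness is trivial, but no mechanism for openness is given, and none is available from your Step 1: the Jacobi field generated by rotating the initial direction is $\tfrac{1}{\sqrt2}\sin(\sqrt2\,t)J\gamma'$, which \emph{vanishes} at $t=\pi/\sqrt2$, so the first-order variation of the endpoint is zero and you get no information about whether $\exp_p\bigl(\tfrac{\pi}{\sqrt2}e^{\sqrt{-1}\theta}v\bigr)$ stays on $Q$ (equivalently, whether the rotated geodesics still have length $d(p,Q)$). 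Under $\BK\geq 1$ Liu--Yuan closed this gap with the complex Hessian comparison plus a strong maximum principle; under only $\HSC\geq 2$ the paper has to introduce the conformal perturbation $g_\ell=e^{-2\psi_\ell}g$ and a delicate second-variation estimate (Lemma \ref{Lem: error analysis}) to force perturbed minimizing geodesics through prescribed points of the $J\nabla d_p$-integral curve. Without supplying an argument of comparable strength, your Step 2 is an assertion, not a proof.

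Step 3 is also essentially unproved, and you say so yourself; note that the paper's route here is quite different and avoids the claims you leave open. First, your Step 1 only extracts the real Hessian comparison in the single direction $J\nabla r$, whereas the paper observes (via Yang's lemma, Lemma \ref{partial BK}) that along an extremal geodesic the direction $e_\gamma$ \emph{minimizes} $\HSC$, which upgrades $\HSC\geq2$ to the radial bisectional bound $\BK(e_\gamma,u)\geq 1+|\langle e_\gamma,\ov u\rangle|^2$; this is exactly what makes the Tam--Yu \emph{complex} Hessian comparison applicable. The paper then uses Proposition \ref{totally gedesic CP1} at every $q\in Q$ to get the splitting $T_qM=T_qQ\oplus T_qL_q$, deduces that every point lies on a minimizing geodesic from $p$ to $Q$, hence $r_p+r_Q\equiv\pi/\sqrt2$ with $\mathrm{Cut}(p)=Q$, and concludes by tracing the two Tam--Yu inequalities for $r_p$ and $r_Q$ (the sum is $\leq 0$ and $=0$, forcing equality) and invoking the Tam--Yu equality theorem to identify $M\setminus Q$ with the geodesic $(\pi/\sqrt2)$-ball in $\CP^n$. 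By contrast, your claims that the boundary attaching map is the Hopf fibration, that $M\cong\CP^n$ biholomorphically, and that $\HSC\equiv2$ on all of $M$ (the appeal to ``each point of $Q$ being at distance $\pi/\sqrt2$ from a complex hypersurface'' is circular --- no such hypersurface through the dual of $q$ is known to exist in $M$) are exactly the statements that need proof, and the one-directional Hessian control from Step 1 is too weak to deliver them.
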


The main ingredient to  Theorem \ref{Thm: main1} and Theorem \ref{Thm: main2} is the following existence result for an extreme $\CP^1$. This is inspired by the work of Liu-Yuan \cite{LY18}.
\begin{proposition}\label{totally gedesic CP1}
Let $(M,\omega)$ be a compact $n$-dimensional K\"ahler manifold with $\HSC\geq 2$ and $p,q$ be two points in $M$ with $d(p,q)=\pi/\sqrt{2}$. Then there is a complex curve $L$ of $M$ such that the following holds:
\begin{enumerate}\setlength{\itemsep}{1mm}
\item[(i)] $L$ is biholomorphic to $\CP^{1}$;
\item[(ii)] $L$ is totally geodesic and isometric to the sphere with radius $1/\sqrt{2}$, and $p,q$ are a pair of antipodal points on $L$.
\end{enumerate}
\end{proposition}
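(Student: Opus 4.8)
The plan is to run the second variation argument of Tsukamoto, but to extract rigidity from the equality case rather than just an inequality. Fix a minimizing geodesic $\gamma:[0,\pi/\sqrt2]\to M$ from $p$ to $q$, parametrized by arclength, and let $e = \gamma'(0)$. Complexify: let $J$ be the complex structure and consider the unit $(1,0)$-vector $u$ built from $e$ and $Je$ along $\gamma$ by parallel transport. The key vector field is $V(t) = \sin(\sqrt2\, t)\,(Je)(t)$, where $(Je)(t)$ is the parallel transport of $Je$ along $\gamma$; this is the vector field that makes the second variation computation sharp when $\HSC \equiv 2$. First I would compute the index form $I(V,V) = \int_0^{\pi/\sqrt2} \big(|\nabla_t V|^2 - \langle R(V,\gamma')\gamma', V\rangle\big)\,dt$ and use $\HSC \ge 2$ together with the algebraic fact that $\langle R(Je,e)e, Je\rangle = \HSC(u) \ge 2$ (for $|e|=1$, using that $\gamma'$ and $J\gamma'$ span a $J$-invariant plane) to show $I(V,V) \le 0$. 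Since $\gamma$ is minimizing, $I(V,V) \ge 0$, hence $I(V,V) = 0$ and $V$ is a Jacobi field realizing equality everywhere.

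From $I(V,V)=0$ I would deduce, step by step, the pointwise rigidity along $\gamma$: the curvature inequality $\HSC((\gamma')^{1,0}) \ge 2$ must be an equality at every point of $\gamma$, and moreover the full curvature term $\langle R(Je,\gamma')\gamma', Je\rangle$ equals $2$ along $\gamma$. Next, because $V$ is Jacobi and vanishes at both endpoints with the model behavior $\sin(\sqrt2 t)$, standard ODE comparison forces the normal exponential structure transverse to the plane spanned by $\gamma', J\gamma'$ to match the model. The crucial point is then to show that the $2$-plane field $\Pi(t) = \mathrm{span}\{\gamma'(t), J\gamma'(t)\}$ is parallel along $\gamma$ and that the surface swept out is totally geodesic: since $J$ is parallel, $J\gamma'$ is parallel along $\gamma$, so $\Pi(t)$ is exactly the parallel transport of $\Pi(0)$; the equality in the index form, applied now to variations of $\gamma$ within directions of $\Pi$, shows $\gamma$ extends to a totally geodesic immersed surface $S$ of constant curvature $2$, i.e. locally a round sphere of radius $1/\sqrt2$.

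Then I would identify $S$ globally. A complete totally geodesic surface of constant curvature $2$ through $p$ tangent to the $J$-invariant plane $\Pi(0)$ is, as an immersed submanifold, a round sphere of radius $1/\sqrt2$; because $\Pi(0)$ is $J$-invariant and $S$ is totally geodesic, $J$ restricts to $S$ (the tangent planes are all $J$-invariant by parallel transport of $\Pi(0)$), so $S$ is a complex curve $L$. Its induced metric has constant Gauss curvature $2$ and area $2\pi/2 = \pi$; a compact complex curve of genus $g$ has $\int_L c_1 = 2-2g$ and here $\int_L \omega = \pi$ gives (after normalizing) genus $0$, so $L \cong \CP^1$. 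Finally, $p$ and $q$ are joined along $L$ by a minimizing geodesic of length $\pi/\sqrt2$, which is exactly half the circumference of the round sphere of radius $1/\sqrt2$, so $p,q$ are antipodal on $L$.

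The main obstacle I anticipate is the passage from the one-dimensional equality information (equality in $I(V,V)=0$ only tells us things along the single geodesic $\gamma$ and for the single test field $V$) to the two-dimensional totally geodesic surface: one must feed in enough test fields — or iterate the argument along nearby geodesics — to control the second fundamental form of $S$ in all normal directions, not merely in the direction $V$. This is where the $J$-invariance is essential, since it reduces the number of independent normal directions one must handle and lets one convert the holomorphic sectional curvature bound into control over the relevant sectional curvatures. A secondary subtlety is ensuring $L$ is embedded (not merely immersed) near $p$ and $q$, and that the global sphere one reconstructs closes up smoothly; here one can use that any two points at distance exactly $\pi/\sqrt2$ must be conjugate-free interior points of such a geodesic, together with the constant-curvature structure, to rule out self-intersections.
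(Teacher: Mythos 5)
Your first step (equality in Tsukamoto's second variation along a single minimizing geodesic $\gamma$, giving that $V=\sin(\sqrt2\,t)\,J\gamma'$ is a Jacobi field and that $R(\gamma',J\gamma',J\gamma',\gamma')\equiv 2$ along $\gamma$) is correct and coincides with the paper's Lemma on curvature rigidity. The genuine gap is the step you yourself flag and then do not resolve: ``the equality in the index form, applied now to variations of $\gamma$ within directions of $\Pi$, shows $\gamma$ extends to a totally geodesic immersed surface $S$.'' The equality case gives information only along the one geodesic $\gamma$ and only for the one test field $V$; the resulting Jacobi field is an \emph{infinitesimal} variation and does not produce actual minimizing geodesics from $p$ to $q$ with initial velocities rotated inside the $J$-invariant plane, nor any surface whose second fundamental form you could then estimate. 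There is no a priori candidate surface: exponentiating the plane $\mathrm{span}\{\gamma'(0),J\gamma'(0)\}$ at $p$ need not be totally geodesic, and with only $\mathrm{HSC}\geq 2$ (no bisectional bound, so no complex Hessian comparison/strong maximum principle as in Liu--Yuan) nothing in your argument forces the nearby geodesics from $p$ in rotated directions to reach $q$, let alone minimize. The subsequent ``standard ODE comparison forces the normal exponential structure to match the model'' is likewise an assertion, not an argument.

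This missing construction is precisely the paper's main technical contribution. There, for each point $\xi(r)$ on the integral curve of $Y=J\nabla d_g(p,\cdot)$ through a point of $\gamma$, one conformally perturbs $g$ by factors supported in shrinking ellipsoids around $\xi(r)$; the perturbed minimizing geodesics from $p$ to $q$ are forced into the support, and a quantitative second variation estimate (using $\mathrm{HSC}\geq 2$ and the extremality $d(p,q)=\pi/\sqrt2$) shows the limiting $g$-minimizing geodesic actually passes through $\xi(r)$. An open--closed argument then yields a genuine family of minimizing geodesics $\bar\gamma_r$ from $p$ to $q$ through every $\xi(r)$; only at that point does the Jacobi-field rigidity (your first step, applied to each $\bar\gamma_r$) identify the variation field, show the initial vectors rotate in the $J$-invariant plane, and exhibit $L=\bigcup_r\bar\gamma_r$ as the totally geodesic round $\CP^1$. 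Without some substitute for this family-of-geodesics construction, your proposal does not prove the proposition. (Minor: the area of the round sphere of radius $1/\sqrt2$ is $2\pi$, not $\pi$; the genus-$0$ conclusion is unaffected since $L$ is simply connected.)
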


Such an existence result was first obtained by Liu–Yuan \cite{LY18} under the condition  $\mathrm{BK}\geq 1$, based on Tam–Yu’s complex Hessian comparison theorem \cite{TY12} and a strong maximum principle argument, where  $\mathrm{BK}\geq 1$ plays a prominent role. To extend this to a weaker curvature condition, we make use of the conformal perturbation method developed by the authors in \cite{CLZ25} to construct a family of minimizing geodesics attaining the extreme distance, whose union gives the extreme $\mathbb{CP}^1$ as desired. The method is flexible enough that it suffices to assume partial curvature positivity, $\mathrm{HSC}\geq 2$.

\bigskip

Apart from the diameter comparison theorem for K\"ahler manifolds, we point out that there are also many interesting comparison theorems concerning volume, eigenvalues, and other geometric quantities. We refer interested readers to \cite{LW05, Lott21, Zhang22, CWZ25, XY24, WY25, FXY25} and the references therein.

\bigskip

The rest of the paper is organized as follows. In Section \ref{Sec: extreme CP1}, we show how to use the conformal deformation to construct  an extreme $\CP^1$, establishing Proposition \ref{totally gedesic CP1}. In Section \ref{Sec: main theorems}, we use the existence of extreme $\mathbb{CP}^1$ to prove the diameter rigidity, i.e. Theorem \ref{Thm: main1} and Theorem \ref{Thm: main2}.

\section*{Acknowledgements}
The first-named author was partially supported by National Key R\&D Program of China 2024YFA1014800 and 2023YFA1009900, and NSFC grants 12471052 and 12271008. The second-named author is supported by Hong Kong RGC grant (Early Career Scheme) of Hong Kong No. 24304222, No. 14300623 and No. 14304225, NSFC grant No. 12222122 and an Asian Young Scientist Fellowship. The third-named author was partially supported by National Key R\&D Program of China 2023YFA1009900, NSFC grant 12401072, and the start-up fund from Westlake University.

\section{ Construction of extreme $\mathbb{CP}^{1}$}\label{Sec: extreme CP1}

In this section, we will prove an extension of \cite[Theorem 1]{LY18}, i.e. Proposition~\ref{totally gedesic CP1}. Particularly, we will show that under $\mathrm{HSC}\geq 2$, if there is a pair of points $p,q\in M$ with distance $\pi/\sqrt{2}$, then the geodesic will be contained in some $\mathbb{CP}^1$ which is totally geodesic and embedded in $M$ both holomorphically and isometrically. We start with a lemma showing that along the geodesic which realizing the diameter upper bound enjoys a better curvature lower bound along the minimizing geodesic, using an observation of Yang \cite{Yang17}.

In this section, we will always assume $(M,\omega)$ is compact $n$-dimensional K\"ahler manifold with $\mathrm{HSC}\geq 2$. Denote its Riemannian metric and complex structure by $g$ and $J$.

\begin{lemma}\label{Lem: curvature rigidity}
If
$\gamma:[0,\pi/\sqrt{2\ell}]\to (M,g)$ is a unit-speed minimizing geodesic connecting $p$ and $q$ for some $0<\ell\leq 1$, then we have $\ell=1$ and
    $$\HSC(e_\gamma)\equiv 2$$ along the geodesic $\gamma$ where
\[
e_\gamma=\frac{1}{\sqrt 2}(\gamma'-\sqrt{-1}J\gamma').
\]
Furthermore, we have $$\BK(e_\gamma,u)\geq 1+|\langle e_\gamma,\bar u\rangle|^2$$ for any unitary $(1,0)$-vector $u$ and $t\in [0,\pi/\sqrt{2}]$.
\end{lemma}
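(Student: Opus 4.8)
The plan is to run Tsukamoto's second-variation argument along $\gamma$ while tracking the equality case, and then to exploit the fact that $e_\gamma$ realizes the minimal value $2$ of the holomorphic sectional curvature.

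\textbf{Step 1 (diameter bound and HSC rigidity).} Write $L=\pi/\sqrt{2\ell}$ for the length of $\gamma$. Since $g$ is K\"ahler and $\gamma$ is a geodesic, both $\gamma'$ and $J\gamma'$ are parallel along $\gamma$, and at each point $\{\gamma',J\gamma'\}$ is an orthonormal basis of the $J$-invariant $2$-plane associated with $e_\gamma$; in particular $R(J\gamma',\gamma',\gamma',J\gamma')$ equals the sectional curvature of that plane, which is $\HSC(e_\gamma)$. Feeding the endpoint-vanishing variation field $V(t)=\sin(\pi t/L)\,J\gamma'(t)$ into the second variation of arclength and using that $\gamma$ is minimizing, and then using $\HSC(e_\gamma)\ge 2$, gives
\[
0\le I(V,V)=\int_0^L\Bigl(\tfrac{\pi^2}{L^2}\cos^2\tfrac{\pi t}{L}-\HSC(e_\gamma)\sin^2\tfrac{\pi t}{L}\Bigr)\,dt\le \frac{\pi^2}{2L}-L.
\]
Hence $L\le \pi/\sqrt2$, i.e. $\ell\ge1$; together with $\ell\le1$ this forces $\ell=1$ and $L=\pi/\sqrt2$, and then the two inequalities collapse, so $\int_0^L(\HSC(e_\gamma)-2)\sin^2(\pi t/L)\,dt=0$. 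Since the integrand is nonnegative and $\sin^2(\pi t/L)>0$ on $(0,L)$, continuity yields $\HSC(e_\gamma)\equiv2$ along $\gamma$.

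\textbf{Step 2 (bisectional curvature bound).} Fix a point of $\gamma$ and abbreviate $e=e_\gamma$. By Step 1 the function $v\mapsto R(v,\ov v,v,\ov v)$ on unitary $(1,0)$-vectors attains its global minimum $2$ at $v=e$. For a unitary $w$ Hermitian-orthogonal to $e$, plug $v=(e+zw)/\sqrt{1+|z|^2}$, $z\in\C$, into this function: vanishing of the first-order term in $z$ forces $R(w,\ov e,e,\ov e)=0$, and nonnegativity of the second-order term, after optimizing the phase of $z$, gives $\BK(e,w)\ge 1+\tfrac12|R(w,\ov e,w,\ov e)|\ge1$. For a general unitary $u$, write $u=ae+\tilde w$ with $a=\langle u,\ov e\rangle$ (so $|a|=|\langle e,\ov u\rangle|$) and $\tilde w$ Hermitian-orthogonal to $e$, whence $|\tilde w|^2=1-|a|^2$. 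Expanding $\BK(e,u)=R(e,\ov e,u,\ov u)$ and using $R(w,\ov e,e,\ov e)=0$ together with its conjugate to kill the cross terms, then $\HSC(e)=2$ and the orthogonal bound, we obtain $\BK(e,u)=2|a|^2+(1-|a|^2)\,\BK(e,\tilde w/|\tilde w|)\ge 1+|a|^2=1+|\langle e_\gamma,\ov u\rangle|^2$ (the case $|a|=1$ being immediate). As this holds at every point, the final assertion follows for all $t\in[0,\pi/\sqrt2]$.

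The routine ingredients are the trigonometric integral and the bookkeeping with the K\"ahler curvature symmetries; the one genuinely essential input — Yang's observation — is that along the extremal geodesic $e_\gamma$ is a \emph{minimizer} of $\HSC$, which upgrades the bare inequality $\HSC\ge2$ to the first- and second-order relations used in Step 2. I expect the care to be concentrated in Step 2, specifically in verifying that the first-order identity $R(w,\ov e,e,\ov e)=0$ eliminates \emph{every} mixed curvature term, so that $\BK(e,u)$ is controlled by $\HSC(e)$ and $\BK(e,\cdot)$ restricted to the Hermitian-orthogonal complement of $e$.
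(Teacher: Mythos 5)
Your proposal is correct and follows essentially the same route as the paper: Step 1 is the same second-variation argument with the test field $\sin(\sqrt{2\ell}\,s)\,J\gamma'$, forcing $\ell=1$ and $\HSC(e_\gamma)\equiv 2$, hence that $e_\gamma$ minimizes $\HSC$ pointwise along $\gamma$. The only difference is that your Step 2 reproves Yang's observation (the paper's Lemma \ref{partial BK}, cited from \cite{Yang17}) by the first- and second-order minimality conditions, rather than quoting it; the computation you give is the standard proof of that lemma and is carried out correctly.
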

\begin{proof}
 From the second variation formula of minimizing geodesics we have
\[
\int_0^{\pi/\sqrt{2\ell}}|V'|^2-R(\gamma',V,V,\gamma')\,\mathrm ds\geq 0
\]
   for any smooth vector field $V$ along $\gamma$ satisfying $V(0)=V(\pi/\sqrt{2\ell})=0$. In particular, if we take
\[
 V(s)=\sin (\sqrt{2\ell}s)\cdot J\gamma',
\]
then  we have
\[
\begin{split}
    0&\leq \int_0^{\pi/\sqrt{2\ell}} 2\ell\cos^2(\sqrt{2\ell}s) -\sin^2 (\sqrt{2\ell}s) R(\gamma',J\gamma',J\gamma',\gamma') \,\mathrm ds\\
    &\leq  2\int_0^{\pi/\sqrt{2\ell}} \cos^2(\sqrt{2\ell}s) -\sin^2 (\sqrt{2\ell}s)  \,\mathrm ds = 0,
\end{split}
\]
where we use the fact that $\ell\leq 1$, $J\gamma'$ is parallel along $\gamma$ and
\[
R(\gamma',J\gamma',J\gamma',\gamma') = \HSC(e_{\gamma}) \geq 2.
\]
Then we have $\ell=1$ and $R(\gamma',J\gamma',J\gamma',\gamma')=2$ along $\gamma$. In particular, $\HSC$ attains its minimum at $e_\gamma$ among all unitary $(1,0)$-vectors. By Lemma \ref{partial BK}, we obtain $\BK(e_\gamma,u)\geq 1+|\langle e_\gamma,\bar u\rangle|^2$ for any unitary $(1,0)$-vector $u$.
\end{proof}

\bigskip

From now on, we fix $p,q\in M$ such that $d_g(p,q)=\pi/\sqrt{2}=\mathrm{diam}(M,g)$ which is realized by a minimizing geodesic $\gamma$ of unit speed. Then it follows from the work \cite{Ehr74}
that we can find a small $C^2$-neighborhood $\mathcal U$ of the metric $g$ and a small positive constant $i_0$ such that we have
$$i(M,h)\geq i_0>0$$
for any smooth metric $h\in\mathcal U$, where $i(M,h)$ denotes the injectivity radius of $(M,h)$. As a consequence, we can take an open neighbourhood $V$ of $p$ such that the $h$-distance function $d_h(p,\cdot)$ to the point $p$ is smooth in $\mathring V:=V\setminus\{p\}$ for any smooth metric $h\in \mathcal U$. Without loss of generality, we can assume that $V$ is a $g$-geodesic ball centred at $p$ of  radius  $r<i(M,g)$. From the classical Hessian estimates for distance function in Riemannian geometry, by choosing $r$ sufficiently small, we can assume further that distance functions $d_h(p,\cdot)$ converge to $d_g(p,\cdot)$ in the sense of $C^1_{\mathrm{loc}}(\mathring V)$ whenever metrics $h\in\mathcal U$ converge to $g$ in $C^2$-sense. Here $h\in \mathcal{U}$ is not necessarily a K\"ahler metric with respect to the original complex structure $J$.

Fix a point $z_0$ near $p$ on the geodesic $\gamma$, which is also contained in $\mathring V$. Let us construct a special coordinate chart around $z_0$. Denote
$$X=\nabla^g d_g(p,\cdot) \mbox{ and }Y=JX\mbox{ in }\mathring V.$$
Take an embedded hyper-surface $\Sigma$ passing through $z_0$ and perpendicular to $Y(z_0)$ at $z_0$. Let
$$\Phi:\mathring V\times (-\ve,\ve)\to M$$
denote the flow generated by $Y$. From our choice of $\Sigma$ it follows that the restricted map
$$\Phi:\Sigma\times (-\ve,\ve)\to M$$
is a diffeomorphism around $z_{0}$. By choosing a coordinate chart $\{x^i\}_{i=1}^{2n-1}$ on $\Sigma$ with $z_0$ corresponding to the origin, we can obtain a coordinate chart $\{x^{1},\ldots,x^{2n-1},t\}$ of $M$ around $z_0$, where $z_0$ corresponds to the origin as well.

Given a small constant $r$ with $|r|\leq 1/2$ and a positive integer $k$, we can define
$$\rho_{r,k}=\left[4r^2-\sum_{i=1}^{2n-1} (x^i)^2-k^2(t-r)^2\right]_+$$
and
$$\mathcal E_{r,k}=\{\rho_{r,k}>0\}.$$

We first show that the geodesic $\gamma$ can be perturbed in a way that it intersects $\mathcal E_{r,k}\cap \{t\equiv r\}$ using method analogous to \cite{CLZ25}.

\begin{proposition}\label{Prop: rough passing geodesic}
There exists a minimizing geodesic $\bar\gamma_{r,k}$ connecting $p$ and $q$, which intersects with $\mathcal E_{r,k}\cap \{t\equiv r\}$.
\end{proposition}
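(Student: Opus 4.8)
The plan is to produce $\bar\gamma_{r,k}$ as a minimizing geodesic of a small \emph{conformal} perturbation $h=e^{-2\psi_{r,k}}g$ of $g$, chosen so that $d_h(p,q)<\pi/\sqrt{2}$. Since $h\le g$, any $h$-minimizing geodesic $\bar\gamma_{r,k}$ then satisfies $L_g(\bar\gamma_{r,k})\ge \pi/\sqrt{2}>d_h(p,q)=L_h(\bar\gamma_{r,k})$, so $\bar\gamma_{r,k}$ must spend positive length in $\{\psi_{r,k}>0\}$, which we arrange to be contained in $\mathcal E_{r,k}$. The subtle point is to find, out of $\gamma$ itself, a competitor that is genuinely shorter for $h$; the essential input for this is the rigidity already isolated in Lemma~\ref{Lem: curvature rigidity}.

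The second variation computation in the proof of Lemma~\ref{Lem: curvature rigidity} shows that the index form of $\gamma$ vanishes on the normal field $V(s):=\sin(\sqrt{2}\,s)\,J\gamma'(s)$, which vanishes at $s=0,\pi/\sqrt{2}$. Hence the curves $\gamma_\delta(s):=\exp_{\gamma(s)}(\delta V(s))$ run from $p$ to $q$, and the smooth function $f(\delta):=L_g(\gamma_\delta)-\pi/\sqrt{2}$ satisfies $f\ge 0$ (because $d_g(p,q)=\pi/\sqrt{2}$), $f(0)=0$, $f'(0)=0$ (first variation at a geodesic), $f''(0)=I(V,V)=0$; since a smooth function that is nonnegative near $0$ with $f(0)=f'(0)=f''(0)=0$ must also have $f'''(0)=0$, we obtain $L_g(\gamma_\delta)=\pi/\sqrt{2}+O(\delta^4)$. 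Now $J\gamma'(s_0)=Y(z_0)=\partial_t$ at $z_0=\gamma(s_0)$ with $s_0>0$ small, so choosing $\delta=\delta_r:=r/\sin(\sqrt{2}\,s_0)$ (comparable to $r$) makes $\gamma_{\delta_r}$ pass through the coordinate point $(O(r^2),r)$ and, since $\cos(\sqrt{2}\,s_0)\neq 0$, cross $\{t\equiv r\}$ transversally there; for $r$ small this crossing lies in $\mathcal E_{r,k}\cap\{t\equiv r\}$, and a direct estimate shows $\gamma_{\delta_r}$ stays inside $\mathcal E_{r,k}$ along an arc of length $\sim r$ about $s_0$.

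Fix a small $\eta>0$ and take $\psi_{r,k}\ge 0$ smooth, supported in $\mathcal E_{r,k}$, equal to $\eta r^2/k^2$ on a fixed proportion of $\mathcal E_{r,k}$ containing that arc, with $\sup\psi_{r,k}\le \eta r^2/k^2$. The key scaling observation is that, since $\mathcal E_{r,k}$ has size $\sim r$ in the $x$-directions but only $\sim r/k$ in $t$, the dominant second derivatives of $\psi_{r,k}$ are the $t$-ones, of size $\sim(\sup\psi_{r,k})(k/r)^2\sim\eta$, so $\|\psi_{r,k}\|_{C^2}\lesssim\eta$ and $h=e^{-2\psi_{r,k}}g\in\mathcal U$ for $\eta$ small (so the injectivity-radius bound $i_0$ and the regularity and $C^1_{\mathrm{loc}}$-convergence of $d_h(p,\cdot)$ recorded above all apply to $h$). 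Along $\gamma_{\delta_r}$ the conformal factor saves length $\gtrsim (\sup\psi_{r,k})\cdot r\sim \eta r^3/k^2$, hence
\[
L_h(\gamma_{\delta_r})\ \le\ \pi/\sqrt{2}+Cr^4-c\,\eta\,\frac{r^3}{k^2}\ <\ \pi/\sqrt{2}
\]
once $r$ is small depending on $k$ (and on the fixed $\eta$). Therefore $d_h(p,q)<\pi/\sqrt{2}=d_g(p,q)$; an $h$-minimizing geodesic $\bar\gamma_{r,k}$ exists, and from $L_h(\bar\gamma_{r,k})=d_h(p,q)<\pi/\sqrt{2}\le L_g(\bar\gamma_{r,k})$ together with $\sup\psi_{r,k}\le \eta r^2/k^2$ one gets that $\bar\gamma_{r,k}$ travels an arc of length $\gtrsim r$ inside $\mathcal E_{r,k}$.

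The remaining — and main — difficulty is to upgrade ``$\bar\gamma_{r,k}$ meets $\mathcal E_{r,k}$'' to ``$\bar\gamma_{r,k}$ meets $\mathcal E_{r,k}\cap\{t\equiv r\}$''. I would take $\psi_{r,k}$ supported in $\mathcal E_{r,k}\cap\{t\ge r\}$, so that $\bar\gamma_{r,k}$ is forced to reach $t\ge r$ at a point with $|x|<2r$; using that $h\equiv g$ off $\mathcal E_{r,k}$ and $h\in\mathcal U$, the bound $L_g(\bar\gamma_{r,k})\le (1-\sup\psi_{r,k})^{-1}d_h(p,q)=\pi/\sqrt{2}+O(r^2)$ shows $\bar\gamma_{r,k}$ is an $O(r^2)$-near $g$-minimizer which near $p$ follows the $h$-gradient flow of $d_h(p,\cdot)$, a $C^1$-small perturbation of $X=\nabla^g d_g(p,\cdot)$; tracking this flow shows $\bar\gamma_{r,k}$ enters the chart at $t<r$, so it must cross $\{t\equiv r\}$ on its way into the region where $\psi_{r,k}>0$, while its $h$-minimality keeps the crossing point at $|x|<2r$. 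Besides this step, the delicate ingredient is the \emph{fourth}-order accuracy $L_g(\gamma_\delta)=\pi/\sqrt{2}+O(\delta^4)$: mere second-order accuracy would give only $L_g(\gamma_{\delta_r})=\pi/\sqrt{2}+O(r^2)$, far too lossy to beat the $\sim \eta r^3/k^2$ gain, so the vanishing of the index form from Lemma~\ref{Lem: curvature rigidity} is used in an essential way.
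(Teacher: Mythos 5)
There is a genuine gap, in fact two. First, the object your construction produces is a minimizing geodesic of the auxiliary metric $h=e^{-2\psi_{r,k}}g$, not of $g$. The proposition (and its later use in Lemma~\ref{Lem: point pass geodesic}, where the curves $\bar\gamma_{r,k}$ are sent $k\to\infty$ and the limit is analyzed via $g$-Jacobi fields and $\exp_p$) requires a $g$-minimizing geodesic from $p$ to $q$ meeting $\mathcal E_{r,k}\cap\{t\equiv r\}$. To convert your $h$-geodesic into one, you would have to send the perturbation to zero and show that the intersection property survives the limit; but with a single fixed perturbation of size $\eta$ there is nothing to pass to the limit with, and as the perturbation shrinks the ``shortcut'' gain shrinks with it, so the argument must be made uniform in the perturbation parameter. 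This is exactly how the paper proceeds: it uses a one-parameter family $g_\ell=e^{-2\psi_\ell}g$, $\ell\to\infty$, applies the second variation formula \emph{along the perturbed minimizing geodesics themselves} with the test field $\sin(\sqrt2\,G_\ell)\,J\gamma_\ell'$ (Lemma~\ref{lma:beta}, Lemma~\ref{Lem: error analysis}), and only at the end takes $\ell\to\infty$ to obtain a genuine $g$-minimizing geodesic.

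Second, the step you yourself flag as the main difficulty --- upgrading ``meets $\mathcal E_{r,k}$'' to ``meets $\mathcal E_{r,k}\cap\{t\equiv r\}$'' --- is precisely the heart of the proof, and your sketch does not close it. Supporting $\psi_{r,k}$ in $\mathcal E_{r,k}\cap\{t\ge r\}$ forces the $h$-geodesic into $\{t\ge r\}$, but it can enter and leave both this region and $\mathcal E_{r,k}$ entirely through the part of $\partial\mathcal E_{r,k}$ with $t>r$, never touching the equatorial disk $\mathcal E_{r,k}\cap\{t=r\}$; there is no topological forcing. Nor does $C^1$-closeness of $\nabla^h d_h(p,\cdot)$ to $\nabla^g d_g(p,\cdot)$ pin a crossing of $\{t=r\}$ to $|x|<2r$: $g$-minimizing geodesics from $p$ to $q$ are far from unique (that non-uniqueness is the whole point of the construction), so the $h$-geodesic may shadow a $g$-minimizer passing nowhere near $z_0$ and approach the bump from a completely different direction. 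The paper's mechanism for this step is different in kind: the curvature hypothesis $\mathrm{HSC}\ge2$ enters the second variation of the perturbed geodesics and yields the integral inequality of Lemma~\ref{Lem: error analysis}, whose left-hand side contains the weight $w_\ell(\rho_{r,k})^2\approx 4k^4(t-r)^2$; if all the $\gamma_\ell$ stayed a definite distance $\iota$ from $\{t=r\}$ inside $\mathcal E_{r,k}$, this term would dominate the $O(\ell^{-1})$ right-hand side for large $\ell$, a contradiction --- so the limit geodesic must meet the slice. (Two smaller points: your smallness of $r$ depends on $k$, which would already obstruct the application in Lemma~\ref{Lem: point pass geodesic}, where $r$ is fixed and $k\to\infty$; and the arc of $\gamma_{\delta_r}$ inside $\mathcal E_{r,k}$ has length of order $1/k$, not $r$, because $\mathcal E_{r,k}$ has thickness $2r/k$ in $t$ while the competitor's $t$-coordinate varies at unit rate in $\delta$-scaled time --- the quantitative bookkeeping in your length-gain estimate needs redoing. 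The fourth-order accuracy observation via the vanishing index form from Lemma~\ref{Lem: curvature rigidity} is correct and a nice idea, but it only produces a short competitor curve, not a minimizing geodesic with the required crossing.)
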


From now on, we fix the constants $r$ and $k$ where $|r|\leq 1/2$. In particular, we always have $0\leq \rho_{r,k}\leq 1$. Let
$f:(-\infty,+\infty)\to [0,+\infty)$ be the smooth function given by
\[
f(t) =
\begin{cases}
\ e^{-\frac{1}{t}} & \mbox{for $t>0$,} \\[1mm]
\ 0 & \mbox{for $t\leq 0$.}
\end{cases}
\]
Given any positive integer $\ell$, we consider the conformal deformation
$$g_{\ell}=e^{-2\psi_\ell}g\mbox{ with }\psi_\ell= f(\ell^{-1}\rho_{r,k}).$$
Denote $d_{\ell}:=d_{g_{\ell}}(p,q)$ and  let
$$\gamma_{\ell}:[0,d_{\ell}]\to (M,g_{\ell})$$
be a minimizing geodesic of unit speed connecting $p$ and $q$ with respect to the metric $g_{\ell}$.

For convenience, we always use $C$ to denote any constants independent of $\ell$, which may vary from line to line. For simplicity, we will use $\nabla$ to denote the Levi-Civita connection of the metric $g$. We first need some estimates in controlling the distortion of $g_{\ell}$ from being K\"ahler.

\begin{lemma}\label{Lem: tangent vector after conformal}
    We have
    $$|J\gamma_{\ell}'|_{g_{\ell}}\equiv 1\mbox{ and }|(J\gamma_{\ell}')'|_{g_{\ell}}\leq C(e^{\psi_\ell}|\nabla\psi_\ell|_g)\circ \gamma_{\ell}$$
    along the geodesic $\gamma_{l}$.
\end{lemma}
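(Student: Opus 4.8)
The plan is to read the prime as covariant differentiation along $\gamma_\ell$ with respect to the Levi-Civita connection $\nabla^{g_\ell}$ of $g_\ell$, and to track how the conformal factor enters; the key point is that the sole obstruction to $J\gamma_\ell'$ being $g_\ell$-parallel is a term which is first order in $\psi_\ell$, hence controlled by $|\nabla\psi_\ell|_g$.

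First I would settle $|J\gamma_\ell'|_{g_\ell}\equiv 1$. Since $(M,g)$ is K\"ahler, $J$ is a $g$-isometry, so for any vector $v$ one has $g_\ell(Jv,Jv)=e^{-2\psi_\ell}g(Jv,Jv)=e^{-2\psi_\ell}g(v,v)=g_\ell(v,v)$; thus $J$ is also a $g_\ell$-isometry, and applying this to $v=\gamma_\ell'$ together with $|\gamma_\ell'|_{g_\ell}=1$ gives the claim.

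Next, write $T=\gamma_\ell'$ and recall the transformation rule for Levi-Civita connections under the conformal change $g_\ell=e^{-2\psi_\ell}g$:
\[
\nabla^{g_\ell}_X Y=\nabla_X Y-(X\psi_\ell)Y-(Y\psi_\ell)X+g(X,Y)\,\nabla\psi_\ell ,
\]
where $\nabla\psi_\ell$ denotes the $g$-gradient. Putting $X=Y=T$ and using the $g_\ell$-geodesic equation $\nabla^{g_\ell}_T T=0$ together with $|T|_g^2=g(T,T)=e^{2\psi_\ell}$ (as $|T|_{g_\ell}=1$) yields $\nabla_T T=2(T\psi_\ell)\,T-e^{2\psi_\ell}\,\nabla\psi_\ell$. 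Since $g$ is K\"ahler, $\nabla J=0$, so $\nabla_T(JT)=J\nabla_T T=2(T\psi_\ell)\,JT-e^{2\psi_\ell}\,J\nabla\psi_\ell$. Feeding $X=T$, $Y=JT$ into the transformation rule, and using $g(T,JT)=0$, I obtain
\[
(J\gamma_\ell')'=\nabla^{g_\ell}_T(JT)=(T\psi_\ell)\,JT-\big((JT)\psi_\ell\big)\,T-e^{2\psi_\ell}\,J\nabla\psi_\ell .
\]

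Finally I would estimate the $g_\ell$-norm term by term along $\gamma_\ell$. Using $|T|_{g_\ell}=|JT|_{g_\ell}=1$, the Cauchy--Schwarz bounds $|T\psi_\ell|,\,|(JT)\psi_\ell|\leq |\nabla\psi_\ell|_g\,|T|_g=e^{\psi_\ell}|\nabla\psi_\ell|_g$ (again because $J$ is a $g$-isometry), and the scaling $|v|_{g_\ell}=e^{-\psi_\ell}|v|_g$, each of the three terms on the right is bounded by $e^{\psi_\ell}|\nabla\psi_\ell|_g$, so $|(J\gamma_\ell')'|_{g_\ell}\leq 3\,(e^{\psi_\ell}|\nabla\psi_\ell|_g)\circ\gamma_\ell$, which is the asserted estimate with $C=3$. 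I do not anticipate a genuine obstacle here: once the interpretation of the prime and the conformal change formula are fixed, the computation is elementary; the only points needing care are consistently distinguishing norms and gradients taken in $g$ versus $g_\ell$, and noting that the mixed term $g(T,JT)$ vanishes, which is exactly what keeps the right-hand side purely first order in $\psi_\ell$. (Interpreting the prime instead as $\nabla$-differentiation along $\gamma_\ell$ leads to the identical bound by the same computation.)
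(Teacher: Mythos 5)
Your proof is correct, and both halves agree with the paper in substance: the first part ($|J\gamma_\ell'|_{g_\ell}\equiv 1$) is identical, and for the second part the only difference is organizational. The paper uses the geodesic equation to write $(J\gamma_\ell')'=(\nabla^{g_\ell}_{\gamma_\ell'}J)\gamma_\ell'$ and then quotes the tensor-norm bound $|\nabla^{g_\ell}J|_{g_\ell}\leq C\,e^{\psi_\ell}|\nabla\psi_\ell|_g$ from Lemma \ref{quantity after conformal} in the appendix, whereas you expand $\nabla^{g_\ell}_{\gamma_\ell'}(J\gamma_\ell')$ directly from the conformal transformation rule for the Levi-Civita connection, using $\nabla^{g_\ell}_{\gamma_\ell'}\gamma_\ell'=0$, $\nabla J=0$, and $g(\gamma_\ell',J\gamma_\ell')=0$. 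The ingredients are the same (the appendix lemma is itself proved by this conformal-change computation), so your version is simply more self-contained: it bypasses the auxiliary lemma and yields the explicit constant $C=3$, at the cost of redoing a computation the paper has packaged once and reuses elsewhere. Your reading of the prime as $g_\ell$-covariant differentiation along $\gamma_\ell$ is also the intended one, consistent with its use in the second variation formula later in the section.
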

\begin{proof}
Since the tangent vector $\gamma_{\ell}'$ is of unit length with respect to $g_{\ell}$, the vector
$$v:=e^{-\psi_\ell}\cdot\gamma_{\ell}'$$
is of unit length with respect to $g$. Since $(M,g,J)$ is K\"ahler, we have
\[
g_{\ell}(J\gamma_{\ell}',J\gamma_{\ell}') = e^{-2\psi_l}g(J(e^{\psi_\ell} v),J(e^{\psi_\ell} v)) = g(Jv,Jv) = 1.
\]
On the other hand since $\gamma_\ell'$ is parallel with respect to $g_\ell$, we have
\[
 |\nabla^{g_\ell}_{\gamma_\ell'}(J\gamma_\ell')|_{g_\ell}
 =|(\nabla^{g_\ell}_{\gamma_\ell'}J )\gamma_\ell'|_{g_\ell}
 \leq  |\nabla^{g_{\ell}}J|_{g_{\ell}}\circ\gamma_\ell\leq C(e^{\psi_\ell}|\nabla\psi_\ell|_g)\circ\gamma_{l},
\]
where we have used Lemma \ref{quantity after conformal}. This completes the proof.
\end{proof}

We will often re-parametrize the time to mimick the situation in $g$.
\begin{lemma}\label{lma:beta}
   For all sufficiently large $\ell$, there is a constant $\beta(\ell)\in (0,1]$ such that
    $$ \int_0^{d_{\ell}}e^{\beta\psi_\ell\circ\gamma_{\ell}}\,\mathrm ds= \frac{\pi}{\sqrt{2}}.$$
\end{lemma}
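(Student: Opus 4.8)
The plan is to view the left-hand side as a continuous function of the parameter $\beta$ and apply the intermediate value theorem. Define, for $\beta \in [0,1]$,
\[
I(\beta) = \int_0^{d_\ell} e^{\beta \psi_\ell \circ \gamma_\ell}\,\mathrm{d}s.
\]
Since $\psi_\ell = f(\ell^{-1}\rho_{r,k}) \geq 0$ and $0 \leq \rho_{r,k} \leq 1$, the integrand is continuous and increasing in $\beta$, so $I$ is continuous and non-decreasing on $[0,1]$. First I would record the two boundary values. At $\beta = 0$ we get $I(0) = d_\ell$, the $g_\ell$-length of $\gamma_\ell$, which equals $d_\ell = d_{g_\ell}(p,q)$. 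At $\beta = 1$ we get $I(1) = \int_0^{d_\ell} e^{\psi_\ell \circ \gamma_\ell}\,\mathrm{d}s$, which is exactly the $g$-length of $\gamma_\ell$ (since $g_\ell = e^{-2\psi_\ell} g$ means $g$-length $=\int e^{\psi_\ell}\,\mathrm{d}s_{g_\ell}$), and this is at least $d_g(p,q) = \pi/\sqrt{2}$ because $\gamma_\ell$ is some path from $p$ to $q$.

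Next I would argue that $I(0) = d_\ell \leq \pi/\sqrt{2}$ for all large $\ell$. Since $\psi_\ell \geq 0$ we have $g_\ell = e^{-2\psi_\ell} g \leq g$, hence $d_{g_\ell} \leq d_g$ and so $d_\ell \leq d_g(p,q) = \pi/\sqrt{2}$. Thus $I(0) \leq \pi/\sqrt{2} \leq I(1)$, and by the intermediate value theorem there exists $\beta = \beta(\ell) \in [0,1]$ with $I(\beta) = \pi/\sqrt{2}$. To get $\beta(\ell) \in (0,1]$ as claimed (strictly positive), I would observe that the perturbed geodesic $\bar\gamma_{r,k}$ from Proposition \ref{Prop: rough passing geodesic} passes through $\mathcal{E}_{r,k} \cap \{t \equiv r\}$, so on an open sub-arc it lies in the region $\{\rho_{r,k} > 0\}$ where $\psi_\ell > 0$; comparing $g_\ell$-lengths, $d_\ell \leq \mathrm{Length}_{g_\ell}(\bar\gamma_{r,k}) = \int e^{-\psi_\ell}\,\mathrm{d}s_g < \mathrm{Length}_g(\bar\gamma_{r,k}) = \pi/\sqrt 2$ once $\ell$ is large enough that $\psi_\ell = f(\ell^{-1}\rho_{r,k})$ is not identically zero along that arc (note $f(s)>0$ for $s>0$, so $\psi_\ell>0$ wherever $\rho_{r,k}>0$, for every $\ell$). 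Hence $I(0) = d_\ell < \pi/\sqrt 2$ strictly, which forces the solution $\beta(\ell)$ to be positive.

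The only mild subtlety I anticipate is the strict inequality $d_\ell < \pi/\sqrt 2$ needed for $\beta(\ell) > 0$: one must be sure the minimizing $g_\ell$-geodesic genuinely benefits from the conformal factor, which is guaranteed because $\bar\gamma_{r,k}$ already spends positive $g$-length in $\{\rho_{r,k}>0\}$ and $d_\ell$ is bounded by \emph{its} $g_\ell$-length. Everything else is a routine application of monotonicity and continuity of $I(\beta)$ together with the pointwise bound $0 \leq \psi_\ell \leq f(\ell^{-1}) \to 0$, which also shows $\beta(\ell)$ is well-defined for all large $\ell$.
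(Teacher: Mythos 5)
Your overall scheme---viewing $I(\beta)=\int_0^{d_\ell}e^{\beta\psi_\ell\circ\gamma_\ell}\,\mathrm ds$ as a continuous, monotone function of $\beta$, computing $I(0)=d_\ell\leq\pi/\sqrt{2}$ from $g_\ell\leq g$ and $I(1)=L_g(\gamma_\ell)\geq d_g(p,q)=\pi/\sqrt{2}$, and applying the intermediate value theorem---is exactly the paper's argument. The one place where you diverge is the step that is supposed to force $\beta(\ell)>0$, and that step is circular: to get the strict inequality $d_\ell<\pi/\sqrt{2}$ you invoke Proposition \ref{Prop: rough passing geodesic} and the geodesic $\bar\gamma_{r,k}$. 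But in the paper's development that proposition is proved \emph{after and by means of} Lemma \ref{lma:beta}: its proof rests on Lemma \ref{Lem: error analysis}, whose statement and proof are formulated in terms of the reparametrization $G_\ell$ in \eqref{eqn:defn-G}, which is defined using precisely the constant $\beta(\ell)$ you are trying to construct. Producing a $g$-minimizing geodesic from $p$ to $q$ passing through $\mathcal E_{r,k}\cap\{t\equiv r\}$ is the whole point of the conformal-perturbation machinery, so it cannot be taken as an input here; there is no independent source for $\bar\gamma_{r,k}$ at this stage.

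The paper handles this point differently, arguing directly that $d_\ell<\pi/\sqrt{2}$ because $g_\ell\leq g$ with $g_\ell<g$ somewhere along $\gamma_\ell$. If you want a self-contained patch that avoids any appeal to later results, you can also dispose of the borderline case as follows: if $d_\ell=\pi/\sqrt{2}$, then for any $g$-minimizing geodesic $c$ from $p$ to $q$ one has $L_{g_\ell}(c)\leq L_g(c)=\pi/\sqrt{2}=d_\ell\leq L_{g_\ell}(c)$, so $c$ is also $g_\ell$-minimizing and $\psi_\ell\equiv 0$ along $c$; choosing $\gamma_\ell$ to be such a curve makes $I(\beta)\equiv\pi/\sqrt{2}$, and $\beta=1$ works. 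When instead $d_\ell<\pi/\sqrt{2}$, your intermediate value argument already produces $\beta(\ell)\in(0,1]$. Apart from this circular step, the remainder of your write-up (continuity, monotonicity, and the two endpoint evaluations) is correct and coincides with the paper's proof.
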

\begin{proof}
    Notice that the integral is continuous with respect to $\beta$. When $\beta=0$, the left hand side equals to $d_\ell=d_{g_\ell}(p,q)$. Since $g_\ell\leq g$ and $g_\ell<g$ somewhere along $\gamma_\ell$, we must have $d_\ell<d_g(p,q)=\pi/\sqrt{2}$. When $\beta=1$,
\[
\int^{d_\ell}_0 e^{\psi_\ell\circ \gamma_\ell} \,ds = \int^{d_\ell}_0 |\gamma_\ell'|_{g} \,ds=L_g(\gamma_\ell)\geq d_{g}(p,q)=\frac{\pi}{\sqrt{2}}
\]
since $\gamma_\ell$ connects $p$ and $q$. The result follows from the intermediate value theorem.
\end{proof}

In the following, we fix $\beta(\ell)$ from Lemma~\ref{lma:beta}  and define
\begin{equation}\label{eqn:defn-G}
G_\ell(t)=\int_0^te^{\beta\psi_\ell\circ \gamma_{\ell}}\,\mathrm ds,\;\mbox{ for }\,t\in [0,d_{\ell}].
\end{equation}
This is a re-parametrization of time in some suitable sense.

\begin{lemma}\label{Lem: error analysis}
We have
\[
\int_0^{d_{\ell}}\sin^2(\sqrt{2}G_\ell(s)) \left(\rho_{r,k}^{-4} \cdot \psi_\ell \cdot w_{\ell}(\rho_{r,k})^2\right)\circ\gamma_{\ell}\,\mathrm ds
\leq {}
 C\ell^{-1}\int_0^{d_{\ell}}\left(\rho_{r,k}^{-2} \cdot \psi_\ell\right)\circ\gamma_{\ell}\,\mathrm ds
\]
for $\ell$ large enough, where $w_\ell=J\gamma'_{\ell}/|\gamma'_{\ell}|_g$.
\end{lemma}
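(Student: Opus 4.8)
The plan is to estimate the left-hand side integrand by comparing the conformally deformed geodesic $\gamma_\ell$ with the second variation of arclength on the original Kähler manifold $(M,g)$. The key point is that $\sin^2(\sqrt{2}G_\ell(s))$ is exactly the profile one would use as a Jacobi-type test field: set $V_\ell(s) = \sin(\sqrt 2\, G_\ell(s))\, w_\ell(s)$, where $w_\ell = J\gamma_\ell'/|\gamma_\ell'|_g$ is a unit field with respect to $g$ along $\gamma_\ell$, and $V_\ell$ vanishes at the endpoints $p,q$ because $G_\ell(0)=0$ and $G_\ell(d_\ell)=\pi/\sqrt 2$ by Lemma~\ref{lma:beta}. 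First I would feed $V_\ell$ into the second variation inequality for the $g$-minimizing geodesic structure — more precisely, into the length functional $L_g$ along the $g_\ell$-geodesic $\gamma_\ell$, using that $\gamma_\ell$ connects $p,q$ and hence $L_g(\gamma_\ell)\geq \pi/\sqrt 2 = d_g(p,q)$, so the second variation of $L_g$ at $\gamma_\ell$ in the direction $V_\ell$ must be nonnegative up to a first-order term that is controlled because $\gamma_\ell$ is a critical point of $L_{g_\ell}$, not of $L_g$. Reparametrizing via $G_\ell$ (which is why $G_\ell$ was introduced) turns the relevant curvature term into $\HSC$ evaluated along $\gamma_\ell$, which by Lemma~\ref{Lem: curvature rigidity} and $\HSC\geq 2$ is $\geq 2$; the ``missing'' curvature positivity compared to the model $\sin$/$\cos$ computation in Lemma~\ref{Lem: curvature rigidity} is then bounded below by the conformal error terms, which carry the factor $\psi_\ell$ and derivatives of $\psi_\ell$.

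The mechanism producing the factor $\rho_{r,k}^{-4}$ is the chain rule on $\psi_\ell = f(\ell^{-1}\rho_{r,k})$: since $f(t)=e^{-1/t}$, we have $f'(t) = t^{-2} f(t)$ and $f''(t) = (t^{-4} - 2t^{-3})f(t)$, so $|\nabla\psi_\ell|_g^2 \lesssim \ell^{-2}\rho_{r,k}^{-4}\psi_\ell \,|\nabla\rho_{r,k}|_g^2$ and $|\Hess\psi_\ell|_g \lesssim \ell^{-2}\rho_{r,k}^{-4}\psi_\ell(|\nabla\rho_{r,k}|_g^2 + \rho_{r,k}) + \ell^{-1}\rho_{r,k}^{-2}\psi_\ell|\Hess\rho_{r,k}|_g$, using $|\rho_{r,k}|\leq 1$. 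The term $w_\ell(\rho_{r,k})^2$ appearing on the left is precisely $|\nabla\rho_{r,k}|_g^2$ measured in the direction $w_\ell$, i.e. the directional derivative squared that enters $|\nabla\psi_\ell\cdot V_\ell|^2$-type quantities. So the left-hand side is, after the chain rule, $\ell^{-2}$ times $\int \sin^2(\sqrt 2 G_\ell)\,\rho_{r,k}^{-4}\psi_\ell (w_\ell\rho_{r,k})^2$ up to constants — wait, I mean the left-hand side equals that integral already, and the estimate asserts it is $\leq C\ell^{-1}\int \rho_{r,k}^{-2}\psi_\ell$. So concretely I would: (i) derive from the second variation / critical point comparison an inequality of the shape $\int \sin^2(\sqrt 2 G_\ell)\cdot(\HSC(e_{\gamma_\ell})-2)\circ\gamma_\ell \lesssim (\text{conformal error})$; (ii) on the other side, bound the conformal error — which involves $\psi_\ell$, $|\nabla\psi_\ell|_g$, $|\Hess\psi_\ell|_g$, the non-Kählerity defect $|\nabla^{g_\ell}J|_{g_\ell}$ from Lemma~\ref{Lem: tangent vector after conformal}, and cross terms with $\sin(\sqrt 2 G_\ell)\cos(\sqrt 2 G_\ell)$ — using the $f'$, $f''$ identities so that every such quantity is $\leq C\ell^{-1}\rho_{r,k}^{-2}\psi_\ell \cdot(\text{bounded geometric factor})$ after absorbing one power of $\ell^{-1}$; and (iii) combine, noting that $\HSC(e_{\gamma_\ell})-2\geq 0$ is not quite the left integrand — instead the left integrand $\rho_{r,k}^{-4}\psi_\ell w_\ell(\rho_{r,k})^2$ arises as the \emph{dominant piece} of the conformal error itself, specifically from the $|V_\ell'|^2$ term where $V_\ell' $ contains $\cos(\sqrt 2 G_\ell)\, G_\ell' w_\ell + \sin(\sqrt 2 G_\ell) w_\ell'$ and $G_\ell' = e^{\beta\psi_\ell\circ\gamma_\ell}$, whose logarithmic derivative is $\beta\, (\nabla\psi_\ell\cdot\gamma_\ell')$ carrying exactly $\rho_{r,k}^{-2}$, and squaring gives $\rho_{r,k}^{-4}$; the remaining lower-order error is the right-hand side $C\ell^{-1}\int\rho_{r,k}^{-2}\psi_\ell$.

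The main obstacle I anticipate is bookkeeping the conformal change of the second variation cleanly: one must track how the Jacobi operator, the curvature term, and the connection difference tensor all transform under $g\mapsto g_\ell = e^{-2\psi_\ell}g$, and isolate which terms are genuinely $O(\ell^{-1})$-small versus which reassemble into the asserted negative (left-hand side) quantity, all while $\psi_\ell$ is supported in the small set $\mathcal E_{r,k}$ where $\rho_{r,k}$ can be close to $0$. The delicate point is that $\rho_{r,k}^{-2}\psi_\ell = \rho_{r,k}^{-2} f(\ell^{-1}\rho_{r,k})$ and $\rho_{r,k}^{-4}\psi_\ell$ are both bounded for fixed $\ell$ (since $f$ decays faster than any polynomial), but the \emph{ratio} that must be $O(\ell^{-1})$ requires the identity $f'(t) = t^{-2}f(t)$ to convert one factor of $\rho_{r,k}^{-2}$ into $\ell f'(\ell^{-1}\rho_{r,k})/\psi_\ell \cdot \ell^{-1} = $ a controlled quantity — i.e. the proof is really an exercise in exploiting $f' = t^{-2}f$ twice. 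A secondary obstacle is justifying the endpoint regularity so that $V_\ell$ is an admissible variation and the $C^1_{\mathrm{loc}}$ convergence of distance functions set up before Proposition~\ref{Prop: rough passing geodesic} applies uniformly in $\ell$; this is where the injectivity radius lower bound $i(M,h)\geq i_0$ on the neighbourhood $\mathcal U$ is used. Once these are in place, the inequality follows by collecting terms.
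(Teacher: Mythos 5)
Your proposal breaks down at the two steps that actually carry the proof. First, the variational inequality you want to use is not available: you propose to take the second variation of the $g$-length functional $L_g$ at the curve $\gamma_\ell$, but $\gamma_\ell$ is a critical point of $L_{g_\ell}$, not of $L_g$, and ``nonnegative up to a controlled first-order term'' does not follow from $L_g(\gamma_\ell)\geq \pi/\sqrt 2$. For a variation $\gamma_u$ of $\gamma_\ell$ one only gets $L_g(\gamma_u)-L_g(\gamma_\ell)\geq -(L_g(\gamma_\ell)-\pi/\sqrt 2)$, and the slack $L_g(\gamma_\ell)-\pi/\sqrt 2\leq C\int \psi_\ell\circ\gamma_\ell\,\mathrm ds$ is \emph{not} of size $\ell^{-1}\int(\rho_{r,k}^{-2}\psi_\ell)\circ\gamma_\ell\,\mathrm ds$ (that would require $\rho_{r,k}^2\lesssim \ell^{-1}$, which fails on most of $\mathcal E_{r,k}$), so the error you would have to absorb is not controlled by the right-hand side of the lemma as you have set things up. The paper avoids this entirely: it applies the honest second variation inequality $\int_0^{d_\ell}|V'|^2_{g_\ell}-R^{g_\ell}(\gamma_\ell',V,V,\gamma_\ell')\,\mathrm ds\geq 0$ for the $g_\ell$-minimizing geodesic with $V=\sin(\sqrt 2 G_\ell)J\gamma_\ell'$, and then transfers to $g$ using the conformal curvature formula (Lemma \ref{quantity after conformal}) and the non-K\"ahlerity bound $|(J\gamma_\ell')'|_{g_\ell}\leq Ce^{\psi_\ell}|\nabla\psi_\ell|_g$ (Lemma \ref{Lem: tangent vector after conformal}); the role of $\beta$ and $G_\ell$ is to make $\int\cos^2(\sqrt2 G_\ell)e^{\beta\psi_\ell}=\int\sin^2(\sqrt2 G_\ell)e^{\beta\psi_\ell}$ exactly, so the model $\sin/\cos$ cancellation survives up to $O(\psi_\ell)$ errors.

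Second, you misidentify where the main term $\rho_{r,k}^{-4}\psi_\ell\, w_\ell(\rho_{r,k})^2$ comes from. You claim it is the dominant piece of $|V_\ell'|^2$, produced by differentiating $G_\ell'=e^{\beta\psi_\ell\circ\gamma_\ell}$. That cannot work: differentiating $\psi_\ell$ along $\gamma_\ell$ gives the \emph{radial} derivative $v_\ell(\rho_{r,k})$, not the $J$-direction derivative $w_\ell(\rho_{r,k})$ that appears in the statement; moreover any such gradient-squared contribution carries $\psi_\ell^2$ rather than $\psi_\ell$ (since $\nabla\psi_\ell=\ell\rho_{r,k}^{-2}\psi_\ell\nabla\rho_{r,k}$ — note your powers of $\ell$ are inverted), and, worst of all, $|V'|^2$ sits on the upper-bound side of the second variation inequality, so a term arising there ends up on the wrong side and cannot be what you are trying to bound. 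In the paper the term arises with a favorable sign from the curvature side: under $g_\ell=e^{-2\psi_\ell}g$, the sectional curvature of the plane $\mathrm{span}\{v_\ell,w_\ell\}$ picks up $+\left[\nabla^2\psi_\ell(v_\ell,v_\ell)+\nabla^2\psi_\ell(w_\ell,w_\ell)\right]$, whose leading part is $\ell^2\rho_{r,k}^{-4}\psi_\ell\left[v_\ell(\rho_{r,k})^2+w_\ell(\rho_{r,k})^2\right]$ by $f''(t)\sim t^{-4}f(t)$; keeping only the $w_\ell$ piece, using $\HSC(g)\geq2$ for the genuine curvature part, and estimating all remaining gradient and non-K\"ahler errors by $C(\ell\rho_{r,k}^{-2}\psi_\ell+\ell^2\rho_{r,k}^{-4}\psi_\ell^2+\psi_\ell)\leq C\ell\rho_{r,k}^{-2}\psi_\ell$ yields the inequality with an overall $\ell^2$ on the left; dividing by $\ell^2$ gives the stated $C\ell^{-1}$. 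Your chain-rule observation $f'=t^{-2}f$ is the right tool, but it must be applied to the Hessian of $\psi_\ell$ inside the conformal transformation of $R^{g_\ell}(\gamma_\ell',J\gamma_\ell',J\gamma_\ell',\gamma_\ell')$, not to the derivative of the reparametrization $G_\ell$.
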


\begin{proof}
    From the second variation formula of minimizing geodesics, we have
    \begin{equation}\label{Eq: 001}
        \int_0^{d_{\ell}}|V'|_{g_{\ell}}^2-R^{g_{\ell}}(\gamma'_{\ell},V,V,\gamma'_{\ell})\,\mathrm ds\geq 0
    \end{equation}
    for any smooth vector field $V$ along $\gamma_{\ell}$ satisfying $V(0)=V(d_{\ell})=0$. If we take
\[
  V(s)=\sin(\sqrt{2}G_\ell(s))\cdot J\gamma_{\ell}',
\]
then $V(s)$ is admissible vector field. By direct computation, we have
\begin{equation}\label{eqn:R-V}
\begin{cases}
\ V'(s)=\sqrt{2}\cos(\sqrt{2} G_\ell(s))\cdot e^{\beta\psi_\ell\circ\gamma_{\ell}}\cdot J\gamma_{\ell}'+\sin(\sqrt{2}G_\ell(s))\cdot (J\gamma_{\ell}')', \\[2mm]
\ R(\gamma_{\ell}',V,V,\gamma_{\ell}')=\sin^2(\sqrt{2}G_\ell(s))\cdot R(\gamma_{\ell}',J\gamma_{\ell}',J\gamma_{\ell}',\gamma_{\ell}').
\end{cases}
\end{equation}
Using Lemma \ref{Lem: tangent vector after conformal} and the fact $\beta\in (0,1]$ we have
    \begin{equation}\label{Eq: 002}
    \begin{split}
        \int_0^{d_{\ell}}|V'(s)|_{g_\ell}^2\,\mathrm ds&\leq \int_0^{d_{\ell}} 2\cos^2(\sqrt{2}G_\ell(s)) \cdot e^{2 \psi_\ell\circ\gamma_{\ell}}\,\mathrm ds\\
        &\quad +C\int_0^{d_{\ell}} e^{2\psi_\ell\circ \gamma_{\ell}}(|\nabla\psi_\ell|_g+|\nabla\psi_\ell|_g^2)\circ \gamma_{\ell}\,\mathrm ds.
    \end{split}
    \end{equation}
Now for convenience, we denote
\[
v_\ell=\frac{\gamma_{\ell}'}{|\gamma_{\ell}'|_g}\mbox{ and }w_\ell=Jv_\ell.
\]
It is clear that  $|v_\ell|_g=|w_\ell|_g=1$ and $g(v_\ell,w_\ell)=0$. From Lemma \ref{quantity after conformal}, \eqref{eqn:R-V} and $\mathrm{HSC}(g)\geq 2$, we have
    \begin{equation}\label{Eq: 003}
    \begin{split}
   &\quad      \int_0^{d_{\ell}}R^{g_{\ell}}(\gamma'_{\ell},V,V,\gamma'_{\ell})\,\mathrm ds\\
   &= \int_0^{d_{\ell}}\sin^2 (\sqrt{2}G_\ell(s))\cdot R^{g_{\ell}}(\gamma'_{\ell},J\gamma_\ell',J\gamma_\ell',\gamma'_{\ell})\,\mathrm ds\\
        &\geq \int_0^{d_{\ell}}2e^{2\psi_\ell\circ\gamma_\ell}\sin^2(\sqrt{2}G_\ell(s))\,\mathrm ds\\
       &\quad + \int_0^{d_{\ell}}e^{2\psi_\ell\circ\gamma_\ell}\sin^2(\sqrt{2}G_\ell(s))\left[\nabla^2\psi_\ell(v_\ell,v_\ell)+\nabla^2\psi_\ell(w_\ell,w_\ell)\right]\circ\gamma_{\ell}\,\mathrm ds\\
        &\quad-C\int_0^{d_{\ell}}e^{2\psi_\ell\circ \gamma_{\ell}}|\nabla\psi_\ell|^2_g\circ \gamma_{\ell}\,\mathrm ds.
    \end{split}
    \end{equation}
Substituting \eqref{Eq: 002} and \eqref{Eq: 003} into \eqref{Eq: 001},
\begin{equation}\label{Eq: 004}
\begin{split}
& \int_0^{d_{\ell}}2\sin^2(\sqrt{2}G_\ell(s))\cdot e^{2\psi_\ell\circ\gamma_\ell}\,\mathrm ds \\
& +\int_0^{d_{\ell}}e^{2\psi_\ell\circ\gamma_\ell}\sin^2(\sqrt{2}G_\ell(s))
\left[\nabla^2\psi_\ell(v_\ell,v_\ell)+\nabla^2\psi_\ell(w_\ell,w_\ell)\right]\circ\gamma_{\ell}\,\mathrm ds \\
\leq {} & \int_0^{d_{\ell}} 2\cos^2(\sqrt{2}G_\ell(s)) \cdot e^{2 \psi_\ell\circ\gamma_{\ell}}\,\mathrm ds \\
& +C\int_0^{d_{\ell}} e^{2\psi_\ell\circ \gamma_{\ell}}(|\nabla\psi_\ell|_g+|\nabla\psi_\ell|_g^2)\circ \gamma_{\ell}\,\mathrm ds.
\end{split}
\end{equation}
By direct computation and our choice of $f$, we have
\[
\nabla\psi_\ell=\ell \cdot \rho_{r,k}^{-2} \cdot \psi_\ell \cdot \nabla \rho_{r,k}
\]
and
\[
\nabla^2\psi_\ell= \ell \cdot \rho_{r,k}^{-2} \cdot \psi_{\ell} \cdot \nabla^2 \rho_{r,k}
+(\ell^2-2\ell\rho_{r,k}) \cdot \rho_{r,k}^{-4} \cdot \psi_{\ell} \cdot \nabla\rho_{r,k}\otimes\nabla \rho_{r,k}.
\]
In the local coordinate chart around $z$, we can always assume
\[
C^{-1}\delta_{ij}\leq g_{ij}\leq C\delta_{ij}\mbox{ and }|\partial g|\leq C.
\]
Then we have
\begin{equation}\label{Eq: 005}
|\nabla\psi_\ell|_g\leq C \ell \cdot \rho_{r,k}^{-2} \cdot \psi_\ell
\end{equation}
and
\begin{equation}\label{Eq: 006}
\nabla^2\psi_\ell(e,e)\geq \frac12 \ell^2 \cdot \rho_{r,k}^{-4} \cdot \psi_\ell \cdot e(\rho_{r,k})^2-C \ell \cdot \rho_{r,k}^{-2} \cdot \psi_\ell
\end{equation}
for all $\ell$ large enough and any $g$-unit vector $e$. Substituting \eqref{Eq: 005}, \eqref{Eq: 006} into \eqref{Eq: 004} and using $1\leq e^{2\psi_\ell}\leq C$, we see that
\begin{equation}\label{Eq: 007}
\begin{split}
& \int_0^{d_{\ell}}\sin^2(\sqrt{2}G_\ell(s))\cdot e^{2\psi_\ell\circ\gamma_\ell}\,\mathrm ds \\
& +C^{-1}\int_0^{d_{\ell}}\sin^2(\sqrt{2}G_\ell(s)) \left( \ell^{2} \cdot \rho_{r,k}^{-4} \cdot \psi_\ell \cdot w_{\ell}(\rho_{r,k})^2\right)\circ\gamma_{\ell}\,\mathrm ds \\
\leq {} & \int_0^{d_{\ell}} \cos^2(\sqrt{2}G_\ell(s)) \cdot e^{2 \psi_\ell\circ\gamma_{\ell}}\,\mathrm ds \\
& +C\int_0^{d_{\ell}}\left(\ell \cdot \rho_{r,k}^{-2} \cdot \psi_\ell+\ell^{2} \cdot \rho_{r,k}^{-4} \cdot \psi_{\ell}^{2}\right)\circ\gamma_{\ell}\,\mathrm ds.
\end{split}
\end{equation}
From the definition of $G_{\ell}$, we see that $G_{\ell}'=e^{\beta\psi_\ell\circ\gamma_\ell}$. Then by the variable substitution,
\[
\int_0^{d_{\ell}}\cos^2(\sqrt{2}G_\ell(s))\cdot e^{\beta\psi_\ell\circ\gamma_\ell}\,\mathrm ds
= \int_0^{d_{\ell}}\sin^2(\sqrt{2}G_\ell(s))\cdot e^{\beta\psi_\ell\circ\gamma_\ell}\,\mathrm ds.
\]
Together with
\[
|\psi_{\ell}|\leq C \mbox{ and } |e^{\beta\psi_\ell}-e^{2\psi_\ell}| \leq C\psi_{\ell},
\]
we have
\[
\int_0^{d_{\ell}}\cos^2(\sqrt{2}G_\ell(s))\cdot e^{2\psi_\ell\circ\gamma_\ell}\,\mathrm ds
\leq \int_0^{d_{\ell}}\sin^2(\sqrt{2}G_\ell(s))\cdot e^{2\psi_\ell\circ\gamma_\ell}+C\psi_{\ell}\circ\gamma_{\ell}\,\mathrm ds.
\]
Substituting this into \eqref{Eq: 007},
\begin{equation}\label{Eq: 008}
\begin{split}
& \int_0^{d_{\ell}}\sin^2(\sqrt{2}G_\ell(s)) \left( \ell^{2} \cdot \rho_{r,k}^{-4} \cdot \psi_\ell \cdot w_{\ell}(\rho_{r,k})^2\right)\circ\gamma_{\ell}\,\mathrm ds \\
\leq {}
& C\int_0^{d_{\ell}}\left(\ell \cdot \rho_{r,k}^{-2} \cdot \psi_\ell+\ell^{2} \cdot \rho_{r,k}^{-4} \cdot \psi_{\ell}^{2}+\psi_{\ell}\right)\circ\gamma_{\ell}\,\mathrm ds.
\end{split}
\end{equation}
Using $t^{-2}e^{-1/t}\leq C$ for $t\in[0,1]$,
\begin{equation}\label{Eq: 009}
\ell \cdot \rho_{r,k}^{-2} \cdot \psi_\ell+\ell^{2} \cdot \rho_{r,k}^{-4} \cdot \psi_{\ell}^{2}+\psi_{\ell} \leq C\ell \cdot \rho_{r,k}^{-2} \cdot \psi_\ell
\end{equation}
Then Lemma \ref{Lem: error analysis} follows from \eqref{Eq: 008} and \eqref{Eq: 009}.
\end{proof}

In the following discussion, we always assume that the constant $r$ is small enough such that we have
$$d_g(\mathcal E_{r,k},\{p,q\})\geq 2\delta$$
for some constant $\delta>0$. Now, we are ready to prove Proposition \ref{Prop: rough passing geodesic}.

\begin{proof}[Proof of Proposition \ref{Prop: rough passing geodesic}]
Up to a subsequence, we can always assume that the geodesics $\gamma_\ell$ converge to a limit one, denoted by $\gamma_\infty$ as $\ell\to+\infty$. It is clear that $\gamma_\infty$ has length $\pi/\sqrt{2}$ with respect to $g$ and connects $p$ and $q$, thus it is a $g$-minimizing geodesic. Therefore, it remains to show that $\gamma_\infty$ has non-empty intersection with $\mathcal E_{r,k}\cap \{t\equiv r\}$.

From a simple comparison argument, we see that
\[
L^{g_{\ell}}(\gamma_{\ell})\leq L^{g_{\ell}}(\gamma_{\ell})<  L^{g}(\gamma_{\ell})\leq L^{g}(\gamma_{\ell})
\]
so that $\gamma_\ell$ must intersect $\mathcal E_{r,k}$. Denote
$$I_\ell:=\gamma_\ell^{-1}(\mathcal E_{r,k})\neq \emptyset.$$
Suppose on the contrary that $\gamma_\infty$ does not intersect with $\mathcal E_{r,k}\cap \{t\equiv r\}$, then we have
\[
\iota:=\inf_{\ell\geq \ell_0}\inf_{s\in I_\ell}|t-r|(\gamma_\ell(s))>0\mbox{ for some }\ell_0\in \mathbb N_+.
\]
Since the metrics $g_{\ell}$ converge to $g$ in $C^2$-sense as $\ell\to \infty$, we know that the gradients $\nabla^{g_{\ell}}d_{g_\ell}(p,\cdot)$ converge uniformly to $\nabla d_g(p,\cdot)$ in $C^0(\mathcal E_{r,k})$ as $\ell\to \infty$.  In particular, we have
\[
\begin{split}
w_{\ell}(\rho_{r,k})
& = \frac{1}{|\gamma_\ell'|_g} g(\nabla \rho_{r,k},J\gamma_\ell')
= \frac{1}{|\gamma_\ell'|_g} g(\nabla \rho_{r,k},J\nabla^{g_{\ell}}d_{g_\ell}) \\[2mm]
& \to g(\nabla \rho_{r,k}, J\nabla d_{g})=-2k^2(t-r) \\[1mm]
\end{split}
\]
    as $\ell\to+\infty$, in $\mathcal E_{r,k}$. Thus, we might assume
    \begin{equation}\label{eqn:w_est}
    \begin{split}
    |w_\ell(\rho_{r,k})|\geq k^2\iota
    \end{split}
    \end{equation}
for $\ell$ large enough, along $\gamma_\ell|_{I_\ell}$.

\smallskip

On the other hand, it follows from the $C^0$-convergence of $g_\ell$ to $g$ that we can have
    $$d_{g_{\ell}}(\mathcal E_{r,k},\{p,q\})\geq \delta$$
for all $\ell$ large enough. If $t\in I_\ell$, then $d_{g_\ell}(\gamma_\ell(t),p)\geq \delta$ implies $t\geq \delta$ so that
\begin{equation}\label{eqn:G-low}
G_\ell(t)=\int^t_0 e^{\beta \psi_\ell\circ \gamma_\ell} \, \mathrm ds\geq t \geq \delta.
\end{equation}
Similarly, $d_{g_\ell}(\gamma_\ell(t),q)\geq \delta$ implies  $t\leq d_\ell-\delta$ so that Lemma~\ref{lma:beta} implies
\begin{equation}\label{eqn:G-upp}
\begin{split}
G_\ell(t)&=\int^t_0 e^{\beta \psi_\ell\circ \gamma_\ell} ds\leq \int^{d_\ell-\delta}_0 e^{\beta \psi_\ell\circ \gamma_\ell}\, \mathrm ds\\
&=\frac{\pi}{\sqrt{2}}-\int^{d_\ell}_{d_\ell-\delta} e^{\beta \psi_\ell\circ \gamma_\ell}\, \mathrm ds\leq \frac{\pi}{\sqrt{2}}-\delta.
\end{split}
\end{equation}
Using \eqref{eqn:w_est}, \eqref{eqn:G-low} and \eqref{eqn:G-upp}, we compute
\begin{equation}\label{eqn:int-low}
\begin{split}
\int_0^{d_{\ell}}\sin^2(\sqrt{2}G_\ell(s)&) \left(\rho_{r,k}^{-4} \cdot \psi_\ell \cdot w_{\ell}(\rho_{r,k})^2\right)\circ\gamma_{\ell}\,\mathrm ds \\
& \geq k^4 \cdot \iota^2 \cdot \sin^2(\sqrt{2}\delta) \int_{I_\ell}(\rho_{r,k}^{-4} \cdot \psi_\ell)\circ\gamma_{\ell} \,\mathrm ds.
\end{split}
\end{equation}
Since $\rho_{r,k}\leq 1$ and $\psi_\ell$ is supported on $\mathcal{E}_{r,k}$, then
\begin{equation}\label{eqn:int-upp}
\ell^{-1}\int_0^{d_{\ell}}\left(\rho_{r,k}^{-2} \cdot \psi_\ell\right)\circ\gamma_{\ell}\,\mathrm ds
\leq C\ell^{-1}\int_{I_{\ell}}\left(\rho_{r,k}^{-4} \cdot \psi_\ell\right)\circ\gamma_{\ell}\,\mathrm ds.
\end{equation}
Combining \eqref{eqn:int-low} and \eqref{eqn:int-upp} with Lemma \ref{Lem: error analysis}, we obtain
\[
k^4 \cdot \iota^2 \cdot \sin^2(\sqrt{2}\delta) \int_{I_\ell}(\rho_{r,k}^{-4} \cdot \psi_\ell)\circ\gamma_{\ell} \,\mathrm ds
\leq C\ell^{-1}\int_{I_{\ell}}\left(\rho_{r,k}^{-4} \cdot \psi_\ell\right)\circ\gamma_{\ell}\,\mathrm ds.
\]
This is impossible when $\ell$ is large enough.
\end{proof}

Recall that $Y=J\nabla d_g(p,\cdot)$ is a smooth vector field in $\mathring V$  perpendicular to $X=\nabla d_g(p,\cdot)$, which means that the integral curves of $Y$ lie in a $g$-geodesic sphere centred at $p$. In particular, there is an integral curve
$$\xi:(-\infty,\infty)\to \mathring V$$
of the vector field $Y$ with $\xi(0)=z_0$.
\begin{lemma}\label{Lem: point pass geodesic}
    For any constant $r$, there is a minimizing geodesic connecting $p$ and $q$ passing through $\xi(r)$.
\end{lemma}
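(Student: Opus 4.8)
The plan is to promote Proposition~\ref{Prop: rough passing geodesic}, which produces a minimizing geodesic intersecting the coordinate box $\mathcal E_{r,k}\cap\{t\equiv r\}$, to the sharper statement that a minimizing geodesic passes through the single point $\xi(r)$, by letting the transverse parameter $k\to\infty$. First I would fix the constant $r$ with $|r|\le 1/2$ and, for each integer $k\ge 1$, invoke Proposition~\ref{Prop: rough passing geodesic} to obtain a minimizing geodesic $\bar\gamma_{r,k}$ from $p$ to $q$ that meets $\mathcal E_{r,k}\cap\{t\equiv r\}$ at some point $z_k$. In the fixed coordinate chart $\{x^1,\dots,x^{2n-1},t\}$ around $z_0$, the set $\mathcal E_{r,k}=\{\rho_{r,k}>0\}$ with $\rho_{r,k}=[4r^2-\sum_i(x^i)^2-k^2(t-r)^2]_+$ intersected with $\{t=r\}$ is exactly the coordinate ball $\{\sum_i(x^i)^2<4r^2,\ t=r\}$ inside the hypersurface $\Sigma$ (recall $\Sigma$ was taken to be $\{t\equiv 0\}$, but the flow $\Phi$ identifies each slice $\{t\equiv c\}$ with $\Sigma$ via time-$c$ flow of $Y$); crucially this ball has coordinate radius $O(r)$ \emph{independent of $k$}, so shrinking $k$ does not by itself pin down the point. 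The point of the construction is rather that $\xi(r)=\Phi(z_0,r)$ is the image of the origin of $\Sigma$ under the time-$r$ flow of $Y$, i.e. it sits at $x^i=0$, $t=r$; this is the \emph{center} of the slice-ball, and I will need to re-run the estimate with the box re-centered at $\xi(r)$ rather than at a point of $\Sigma$.

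Concretely, I would replace $\rho_{r,k}$ by the $k$-scaled bump centered at $\xi(r)$ — which in these coordinates is $\rho_{r,k}=[4r^2-\sum_i(x^i)^2-k^2(t-r)^2]_+$ exactly as written, since the origin of the $x$-chart was chosen at $z_0=\xi(0)$ and the $t$-axis is the $Y$-flow line through $z_0$, so $\{x^i=0\}$ is precisely $\xi$ and $\{x^i=0,t=r\}=\{\xi(r)\}$. Thus $\mathcal E_{r,k}\cap\{t\equiv r\}$ is the coordinate ball of radius $2|r|$ about $\xi(r)$ inside the slice. Now I let $k\to\infty$. The sequence $\bar\gamma_{r,k}$ of unit-speed minimizing $g$-geodesics from $p$ to $q$ lies in a compact set, so after passing to a subsequence it converges in $C^\infty$ (Arzel\`a–Ascoli plus the geodesic equation) to a unit-speed minimizing $g$-geodesic $\bar\gamma_r$ from $p$ to $q$; its length is $\pi/\sqrt2=\diam(M,g)$, so it is indeed minimizing. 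Each $z_k$ lies in the slice-ball of radius $2|r|$ about $\xi(r)$ and additionally satisfies $k^2(t(z_k)-r)^2<4r^2$, i.e. $|t(z_k)-r|<2|r|/k\to 0$; so after a further subsequence $z_k\to z_\infty$ with $t(z_\infty)=r$ and $z_\infty$ in the closed slice-ball. Since $z_k\in\bar\gamma_{r,k}$ and $\bar\gamma_{r,k}\to\bar\gamma_r$, we get $z_\infty\in\bar\gamma_r$.

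The remaining — and main — obstacle is to upgrade ``$z_\infty$ lies in the closed slice-ball about $\xi(r)$'' to ``$z_\infty=\xi(r)$''. This does not follow from the $k\to\infty$ limit alone, because the ball radius $2|r|$ is fixed. The resolution is to also send $r\to 0$: for each $r$ one produces, by the above, a minimizing geodesic $\bar\gamma_r$ through a point $z_\infty(r)$ with $d_g(z_\infty(r),\xi(r))\le C|r|$ (the coordinate ball of radius $2|r|$ has $g$-diameter $\le C|r|$ since $C^{-1}\delta_{ij}\le g_{ij}\le C\delta_{ij}$). Hmm — but the Lemma asserts a geodesic through $\xi(r)$ for \emph{every} $r$, not just small $r$. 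I think the correct reading, consistent with the subsequent use in Proposition~\ref{totally gedesic CP1}, is that $z_0$ was already chosen near $p$ and $r$ ranges over small values; alternatively, one runs the argument with $z_0$ replaced by an arbitrary point of the geodesic sphere and $\xi$ the corresponding $Y$-flow line, and ``any constant $r$'' means any $r$ for which $\xi(r)$ stays in $\mathring V$. With that understood, I would argue as follows to hit $\xi(r)$ exactly for each admissible $r$: fix $r$, and for $j\ge 1$ apply the construction of Proposition~\ref{Prop: rough passing geodesic} with the \emph{smaller} box $\mathcal E_{r/j,\,k}$ re-centered at $\xi(r)$ — that is, use the bump $\rho^{(j)}_{r,k}=[4(r/j)^2-\sum_i(x^i-\xi(r)^i)^2-k^2(t-r)^2]_+$, noting $\xi(r)$ has coordinates $(0,\dots,0,r)$ so this is literally $[4(r/j)^2-\sum_i(x^i)^2-k^2(t-r)^2]_+$. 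For each $j$, letting $k\to\infty$ as above yields a minimizing geodesic $\bar\gamma^{(j)}$ meeting the closed ball of $g$-radius $\le C r/j$ about $\xi(r)$; letting $j\to\infty$ and taking a convergent subsequence of the $\bar\gamma^{(j)}$ gives a minimizing geodesic $\bar\gamma$ from $p$ to $q$ with $\xi(r)\in\bar\gamma$. The only delicate point in this last step is that Proposition~\ref{Prop: rough passing geodesic} and the lemmas feeding it (\ref{Lem: error analysis}, \ref{lma:beta}) are stated for the fixed $\rho_{r,k}$; but their proofs use only that $\rho_{r,k}$ is a fixed smooth nonnegative function, compactly supported in $\mathring V$, bounded by $1$, with $w_\ell(\rho)\to g(\nabla\rho,J\nabla d_g)$ having a definite lower bound along the part of the geodesic inside the support — all of which hold verbatim for $\rho^{(j)}_{r,k}$ once $j$ and $k$ are fixed and $r/j\le 1/2$, because the recentered bump is still radial in the $x$-variables and its $Y$-derivative is $-2k^2(t-r)$ exactly as before. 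Hence the estimates go through unchanged, and the double limit $k\to\infty$ then $j\to\infty$ delivers the claim.
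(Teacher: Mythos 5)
Your core analytic step --- invoke Proposition~\ref{Prop: rough passing geodesic} and pass to a limit of minimizing geodesics as the bump localizes --- is the same as the paper's, and your observation that $\mathcal E_{r,k}\cap\{t\equiv r\}$ is a slice ball of coordinate radius $2|r|$ about $\xi(r)$, independent of $k$, is a fair reading of the stated $\rho_{r,k}$; introducing a second parameter to shrink the $x$-radius and rerunning Lemmas~\ref{lma:beta}--\ref{Lem: error analysis} (which only use structural features of the bump: $0\leq\rho\leq 1$, support away from $p,q$, and $Y(\rho)=-2k^2(t-r)$) is a reasonable way to force the limit geodesic through $\xi(r)$ itself, in line with the intended localization.

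The genuine gap is in how you treat ``for any constant $r$''. Your argument only yields a geodesic through $\xi(r)$ for $r$ in a small range determined by the fixed base point $z_0$ (the chart and flow-box size, hence possibly much smaller than a rotation period), and neither of your proposed remedies closes this. Reading the lemma as ``small $r$ only'' is insufficient for its use: Lemma~\ref{Lem: rotation} and Lemma~\ref{Lem: sphere metric} need the family $\bar\gamma_r$ for $r$ ranging over (at least) a full period $2\pi\sin\kappa$ to sweep out all of $L\cong\CP^1$; with small $r$ you only produce a lune. And re-basing the construction at an \emph{arbitrary} point of the geodesic sphere is not legitimate: the conformal perturbation only drags the geodesics $\gamma_\ell$ into the bump because a $g$-minimizing geodesic from $p$ to $q$ already passes through (or arbitrarily close to) the support of $\psi_\ell$; since $\sup\psi_\ell\to 0$, for large $\ell$ the conformal gain cannot offset a definite detour, so the base point must already lie on a minimizing geodesic --- which is exactly what is to be proved at a general point of the sphere. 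The paper resolves this with a connectedness argument absent from your plan: it sets $\mathcal S=\{r:\ \text{some minimizing geodesic passes through }\xi(r)\}$, notes $0\in\mathcal S$ and that $\mathcal S$ is closed, and proves openness at any $r_0\in\mathcal S$ by re-centering the whole construction at $\xi(r_0)$ --- admissible precisely because $r_0\in\mathcal S$ supplies a minimizing geodesic through the new base point (and $\xi(r_0)$ stays on the compact geodesic sphere of radius $\kappa$ inside $\mathring V$, so your worry about leaving $\mathring V$ is moot) --- whence $\mathcal S=\mathbb R$. Your small-$r$ construction is exactly the openness step; to prove the lemma as stated you must add this open--closed bookkeeping, together with the observation that membership in $\mathcal S$ is what licenses the re-basing.
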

\begin{proof}
Define
$$\mathcal S=\{r\in\mathbb R:\mbox{ there is a minimizing geodesic passing through }\xi(r)\}.$$
Clearly, we have $\mathcal S\neq \emptyset$ since $0\in\mathcal S$, and $\mathcal S$ is closed. So it remains to show that $\mathcal S$ is also open. Without loss of generality, we only show that $\mathcal S$ is open at $0$. Namely for every small constant $r$, there is a minimizing geodesic passing through $\xi(r)$.

From Proposition \ref{Prop: rough passing geodesic} we can find a minimizing geodesic $\bar\gamma_{r,k}$ connecting $p$ and $q$, which intersects with $\mathcal E_{r,k}\cap\{t\equiv r\}$. Up to a subsequence, we can assume that the geodesics $\bar\gamma_{r,k}$ converge to a limit minimizing geodesic connecting $p$ and $q$, denoted by $\bar\gamma_r$ as $k\to+\infty$. Notice that
$$\lim_{k\to\infty}\mathcal E_{r,k}\cap \{t\equiv r\}=\{\xi(r)\}.$$
Therefore, we conclude that $\bar\gamma_r$ passes through the point $\xi(r)$. This completes the proof.
\end{proof}

It follows from Lemma \ref{Lem: point pass geodesic} that the integral curve $\xi$ always lies in the injective region with respect to $p$, we can define
$$\bar v(r)=\frac{\exp_p^{-1}(\xi(r))}{|\exp_p^{-1}(\xi(r))|_g}.$$
From previous discussion we have shown that the curves
$$\bar\gamma_r(s):=\exp_p(s\bar v(r))\mbox{ with }s\in [0,\pi/\sqrt{2}]\mbox{ and }r\in \mathbb R$$
form a smooth family of minimizing geodesics connecting $p$ and $q$. Denote
$$\kappa=d_g(p,z_0).$$

\begin{lemma}\label{Lem: rotation}
    We have
    $$\bar v(r)=\cos \left(\frac{r}{\sin\kappa}\right)\cdot\bar v(0)+\sin \left(\frac{r}{\sin\kappa}\right)\cdot J\bar v(0).$$
\end{lemma}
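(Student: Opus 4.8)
The plan is to identify the family $r \mapsto \bar v(r)$ in the unit sphere of $T_pM$ as a solution of a linear second-order ODE and then solve it explicitly. First I would set up notation: write $\bar\gamma_r(s) = \exp_p(s\bar v(r))$, so that $J_r(s) := \partial_r \bar\gamma_r(s)$ is a Jacobi field along $\bar\gamma_r$ for each fixed $r$, vanishing at $s=0$ (since all geodesics emanate from $p$). Its value at $s = \pi/\sqrt 2$ is $\partial_r \xi(r) = Y(\xi(r))$, the generator of the flow, which by construction has $g$-norm $\sin\kappa$ (it is tangent to the $g$-geodesic sphere of radius $\kappa$ about $p$, and the flow of $Y = J\nabla d_g(p,\cdot)$ rotates at unit angular speed on that sphere). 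The key geometric input is Lemma \ref{Lem: curvature rigidity}: along each $\bar\gamma_r$, which realizes the extremal distance, the vector $e_{\bar\gamma_r}$ has $\mathrm{HSC} = 2$ and the mixed bisectional curvatures are pinched, $\BK(e_{\bar\gamma_r},u)\ge 1 + |\langle e_{\bar\gamma_r},\bar u\rangle|^2$. Combined with the second-variation/index-form rigidity already extracted in the proof of Lemma \ref{Lem: curvature rigidity} (the index form of $\sin(\sqrt 2 s)J\bar\gamma_r'$ vanishes), this forces the Jacobi field $J_r$ to be, up to a parallel transported factor, exactly $\sin(\sqrt 2 s)\cdot W_r(s)$ where $W_r$ is the parallel transport of $J\bar\gamma_r'(0)$ — i.e. the geometry transverse to $\bar\gamma_r$ in the $J$-direction is that of the sphere of radius $1/\sqrt 2$.

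Granting that, I would evaluate at $s = \pi/\sqrt 2$: there $\sin(\sqrt 2 s) = \sin\pi = 0$ would be the naive reading, so more care is needed — the relevant Jacobi fields vanishing at $s=0$ and controlling $\partial_r$ at the endpoint are the ones in the \emph{orthogonal complement} of $\mathrm{span}\{\bar\gamma_r', J\bar\gamma_r'\}$ together with the particular combination that survives. The cleaner route: differentiate the defining relation $\exp_p(\tfrac{\pi}{\sqrt2}\,\bar v(r)) = \xi(r)$ in $r$ and use the identification of the differential of $\exp_p$ along $\bar v(r)$ with the Jacobi-field evolution. Because $\bar\gamma_r$ is totally geodesic-like in the $J$-plane with constant curvature $2$, the component of $\bar v'(r)$ in the plane $\Pi_r = \mathrm{span}\{\bar v(r), J\bar v(r)\}$ is governed by $d(\exp_p)_{\frac{\pi}{\sqrt 2}\bar v(r)}$ restricted to $\Pi_r$, which acts as $\sin(\sqrt 2\,\tfrac{\pi}{\sqrt2})/\sqrt 2 = 0$ in the $J\bar v(r)$ slot and as identity in the radial slot; since $|\bar v(r)|\equiv 1$ forces $\bar v'(r)\perp\bar v(r)$, the radial component vanishes and hence $\bar v'(r)$ lies along $J\bar v(r)$, with $|\bar v'(r)|$ determined by $|Y(\xi(r))|_g = \sin\kappa$ and the normal Jacobi-field factor. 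A bookkeeping of the normalization gives $\bar v'(r) = \tfrac{1}{\sin\kappa}\, J\bar v(r)$.

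This ODE $\bar v'(r) = \tfrac{1}{\sin\kappa} J\bar v(r)$ on the unit sphere of $(T_pM, g, J)$ is linear with the constant skew-adjoint generator $\tfrac{1}{\sin\kappa}J$ (here $J$ acts on $T_pM\cong\C^n$ and $J^2 = -\mathrm{id}$), so it integrates immediately to $\bar v(r) = \exp(\tfrac{r}{\sin\kappa}J)\bar v(0) = \cos(\tfrac{r}{\sin\kappa})\bar v(0) + \sin(\tfrac{r}{\sin\kappa})J\bar v(0)$, which is the claimed formula; note $\bar v(0) = \bar v$ is the initial direction and $J\bar v(0) = \bar v'(0)\sin\kappa$ is consistent with $\xi'(0) = Y(z_0)$.

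I expect the main obstacle to be making the step "$\bar v'(r)$ is proportional to $J\bar v(r)$ with the right constant" fully rigorous: one must carefully relate $\partial_r\xi(r) = Y(\xi(r))$, whose $g$-norm is $\sin\kappa$ and whose direction at $\xi(r)$ is $J\nabla d_g(p,\cdot)$, to the endpoint value of the $p$-based Jacobi field $J_r$, and then use the curvature rigidity from Lemma \ref{Lem: curvature rigidity} to pin down $J_r(s) = \sin(\sqrt2 s)\,W_r(s)$ with $W_r(\tfrac{\pi}{\sqrt2})$ pointing along $J\bar\gamma_r'(\tfrac{\pi}{\sqrt2})$, matching $Y$ at $q$'s side. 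The subtlety is that $\sin(\sqrt2 s)$ vanishes at both endpoints, so the identification of $\bar v'(r)$ must be done by differentiating $\exp_p$ at an interior comparison and passing to the limit, or equivalently by noting that the \emph{direction} of the Jacobi field — i.e. of $W_r$ — is what transports correctly, while the scalar factor drops out of the normalized $\bar v(r)$. Once this identification is secured, the rest is the trivial ODE integration above.
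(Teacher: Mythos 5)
Your overall strategy is the same as the paper's (read off an ODE $\bar v'(r)=c\,J\bar v(r)$ from a Jacobi-field identification and integrate), and the final integration step is fine; but the step that actually produces the ODE is exactly where your argument has a genuine gap, and the setup you propose for it is wrong on two concrete points. First, the relation you want to differentiate, $\exp_p\bigl(\tfrac{\pi}{\sqrt2}\bar v(r)\bigr)=\xi(r)$, is false: every $\bar\gamma_r$ ends at $q$, so the variation field $\mathcal J_r(s)=(\mathrm d\exp_p)_{s\bar v(r)}(s\bar v'(r))$ vanishes at $s=\pi/\sqrt2$ and carries no information there. Since $\xi$ is an integral curve of $Y=J\nabla d_g(p,\cdot)$, it stays on the geodesic sphere of radius $\kappa=d_g(p,z_0)$, so the correct relation is $\exp_p(\kappa\,\bar v(r))=\xi(r)$; differentiating it gives $\mathcal J_r(\kappa)=Y(\xi(r))=J\bar\gamma_r'(\kappa)$, an evaluation at the \emph{interior} time $\kappa$, where the model Jacobi factor does not vanish -- so the degeneracy you spend the last paragraph worrying about never arises. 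Second, $|Y|_g\equiv 1$ (since $|\nabla d_g|=1$ and $J$ is an isometry), not $\sin\kappa$: the factor $\sin\kappa$ does not come from the norm of $Y$.

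What the paper does, and what your ``bookkeeping of the normalization'' would have to be, is the following comparison: by the curvature rigidity of Lemma \ref{Lem: curvature rigidity} one has an explicit Jacobi field of the form $f(s)\,J\bar\gamma_r'(s)$ with $f(0)=0$, $f(\kappa)=1$ along $\bar\gamma_r$; it has the same values as $\mathcal J_r$ at $s=0$ and $s=\kappa$, and since $\bar\gamma_r$ is minimizing there are no conjugate points on $[0,\kappa]$, so the two Jacobi fields coincide; evaluating derivatives at $s=0$ then gives $\bar v'(r)=\mathcal J_r'(0)=\tfrac{1}{\sin\kappa}J\bar v(r)$, and integration finishes the proof. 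This identification is precisely the missing content of your proposal, and with your inputs (endpoint $\pi/\sqrt2$ instead of $\kappa$, $|Y|_g=\sin\kappa$ instead of $1$) it cannot be carried out. (A minor side remark: since the rigidity gives sectional curvature $2$ on the plane spanned by $\bar\gamma_r'$ and $J\bar\gamma_r'$, the model factor is of $\sin(\sqrt2 s)$ type, as you note, so the precise angular speed requires care with constants; but the substance used later -- that $\bar v(r)$ rotates at constant speed in the plane spanned by $\bar v(0)$ and $J\bar v(0)$ -- is unaffected.)
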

\begin{proof}
Notice that the Jacobi field
\[
\mathcal J_r(s)=(\mathrm d\exp_p)_{s\bar v(r)}(s\bar v'(r))
\]
along $\bar\gamma_r$ satisfies
\[
   \mathcal J_r(0)=0\mbox{ and }\mathcal J_r(\kappa)=J\bar\gamma_r'(\kappa).
\]
On the other hand, from Lemma \ref{Lem: curvature rigidity} we can construct the Jacobi field
\[
\tilde{\mathcal J}_r(s)=\frac{\sin s}{\sin\kappa}J\bar\gamma_r'(s)
\]
along $\bar\gamma_r$ satisfying the same boundary condition. Since $\bar\gamma_r$ are minimizing geodesics, no conjugate point appears along $\bar\gamma_r$ and so we have $\mathcal J_r=\tilde{\mathcal J_r}$. In particular, we see
\[
\bar v'(r)=\mathcal J_r'(0)=\tilde{ \mathcal J}_r'(0)=\frac{J\bar v(r)}{\sin\kappa}.
\]
The result follows from solving this equation.
\end{proof}

Denote
$$L:=\bigcup_{r\in \mathbb R}\bar\gamma_r\big([0,\pi/\sqrt{2}]\big).$$

\begin{lemma}\label{Lem: sphere metric}
    $L$ is a totally-geodesic embedded surface in $(M,g)$ which is isometric to the sphere with radius $1/\sqrt{2}$, where $p$ and $q$ are a pair of antipodal points on $L$.
\end{lemma}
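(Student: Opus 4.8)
The plan is to show that $L$, built as the image of the smooth family $r\mapsto\bar\gamma_r$ of minimizing geodesics, is the image of a single isometric immersion from a round $2$-sphere and is totally geodesic. First I would set up coordinates on a round sphere $\mathbb{S}^2_{1/\sqrt{2}}$ of radius $1/\sqrt{2}$: think of it as $[0,\pi/\sqrt{2}]\times\mathbb{R}/(\text{identifications at poles})$ with metric $ds^2+\tfrac12\sin^2(\sqrt{2}s)\,d\theta^2$. Using Lemma~\ref{Lem: rotation}, the map $\theta\mapsto\bar v(\theta\sin\kappa)$ is exactly the unit-speed parametrization of a great circle in the unit sphere $S_p\subset T_pM$ spanned by $\bar v(0)$ and $J\bar v(0)$; hence $r\mapsto\bar v(r)$ is $2\pi\sin\kappa$-periodic and the map $\Psi(s,\theta):=\exp_p\!\big(s\,(\cos\theta\,\bar v(0)+\sin\theta\,J\bar v(0))\big)$, with $(s,\theta)\in[0,\pi/\sqrt{2}]\times\mathbb{R}/2\pi\mathbb{Z}$, has image exactly $L$.

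Next I would compute the induced metric $\Psi^*g$. Along each $\bar\gamma_\theta$ the $s$-curve is a unit-speed geodesic, so $|\Psi_*\partial_s|_g^2=1$ and $g(\Psi_*\partial_s,\Psi_*\partial_\theta)=0$ (the latter because $\Psi_*\partial_\theta$ is a Jacobi field vanishing at $s=0$, hence everywhere normal to $\bar\gamma_\theta'$). For the $\theta\theta$-component, $\Psi_*\partial_\theta(s)$ is the Jacobi field along $\bar\gamma_\theta$ with value $0$ at $s=0$ and derivative $J\bar\gamma_\theta'(0)$ at $s=0$ (by Lemma~\ref{Lem: rotation}, $\bar v'(\theta\,(\cdot))$ picks out $J\bar v$); by the identification of Jacobi fields carried out in the proof of Lemma~\ref{Lem: rotation}, this Jacobi field equals $\frac{\sin s}{\sin\kappa}\cdot(\text{something})$ — more precisely, rescaling, $\Psi_*\partial_\theta(s)=\frac{1}{\sqrt2}\sin(\sqrt2 s)\,\big(\text{parallel transport of }J\bar\gamma_\theta'(0)\big)$ up to the normalization fixed by $\HSC(e_\gamma)\equiv 2$. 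Hence $|\Psi_*\partial_\theta|_g^2=\tfrac12\sin^2(\sqrt2 s)$, giving $\Psi^*g=ds^2+\tfrac12\sin^2(\sqrt2 s)\,d\theta^2$, which is exactly the round metric of radius $1/\sqrt2$. This also forces $\sin\kappa$ to be consistent with the periodicity, and identifies $p,q$ (the $s=0$ and $s=\pi/\sqrt2$ loci) as antipodal points.

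Then I would upgrade ``immersion with round induced metric'' to ``totally geodesic embedded submanifold''. Embeddedness: away from the poles $\Psi$ is an immersion (the metric computation shows $\Psi_*\partial_s,\Psi_*\partial_\theta$ are linearly independent for $0<s<\pi/\sqrt2$), and injectivity follows because all the $\bar\gamma_\theta$ are distinct minimizing geodesics from $p$ with distinct initial directions $\bar v(\theta)$ lying on a single great circle of $S_p$, so they only meet at $p$ and $q$; near $p$ and $q$ one uses that $\exp_p$ is a diffeomorphism on the relevant ball. Totally geodesic: the curves $\bar\gamma_\theta$ are geodesics of $M$ lying in $L$, and so are the ``latitude-type'' geodesics of the round metric; since through every point of $L$ and every tangent direction to $L$ there is a geodesic of $M$ staying in $L$ (great circles of the model sphere through that point, transported by $\Psi$, all of which are $M$-geodesics because they are images of $\exp_p$ of great circles in $S_p$ — here I would invoke Lemma~\ref{Lem: rotation} again to see every such circle arises as some reparametrized $\bar\gamma$), the second fundamental form of $L$ vanishes identically.

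I expect the main obstacle to be the rigorous bookkeeping in the last step: showing that \emph{every} geodesic of $M$ tangent to $L$ stays in $L$, not just the distinguished families $\{\bar\gamma_\theta\}$ and the $s$-level loops. The clean way is to observe that the computation $\Psi^*g=ds^2+\tfrac12\sin^2(\sqrt2 s)d\theta^2$ already shows $(L,g|_L)$ is a complete round sphere whose geodesics through $p$ are precisely the $\bar\gamma_\theta$'s — and these are $M$-geodesics; since the isometry group of the round sphere acts transitively on (point, direction) pairs and this action is realized inside $M$ by the fact that any point of $L$ can play the role of $p$ (re-run the entire construction of Section~\ref{Sec: extreme CP1} with a different basepoint, or use that $L$ being extremal is basepoint-symmetric), one concludes all $L$-geodesics are $M$-geodesics. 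A slightly more self-contained alternative is to note that the second fundamental form $A$ of $L$ satisfies $A(\bar\gamma_\theta',\bar\gamma_\theta')=0$ for all $\theta$ and, using that the $\bar\gamma_\theta'$ together with their $J$-rotates span, together with the Codazzi equation and the curvature information from Lemma~\ref{Lem: curvature rigidity}, deduce $A\equiv 0$; I would present whichever of these the ambient lemmas make shortest.
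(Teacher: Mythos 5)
Your route (induced metric first via Jacobi fields, then total geodesy) inverts the paper's order, and the step you yourself flag as the ``main obstacle'' --- that $L$ is totally geodesic --- is genuinely missing, and neither of your two proposed patches works as stated. The basepoint-symmetry argument is unjustified: re-running the construction of Section \ref{Sec: extreme CP1} from an arbitrary $x\in L$ requires a point of $M$ at \emph{ambient} distance $\pi/\sqrt{2}$ from $x$, which is only known for the pair $(p,q)$; the intrinsic antipode of $x$ in $L$ need not realize ambient distance $\pi/\sqrt{2}$, and saying the round sphere's isometry group ``is realized inside $M$'' is exactly what is not known. The Codazzi alternative also stalls: through an interior point of $L$ passes exactly one geodesic $\bar\gamma_\theta$, so you only learn $A(e_1,e_1)=0$ for the single direction $e_1=\bar\gamma_\theta'$; the latitude curves are not $M$-geodesics, ``the $\bar\gamma_\theta'$ together with their $J$-rotates span'' produces no further null directions of the quadratic form $A$, and you never derive $A(e_1,e_2)=A(e_2,e_2)=0$. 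The paper closes this step with a short K\"ahler identity argument you do not use: $TL$ is $J$-invariant, $e_1=\bar\gamma_r'$ is geodesic, $e_2=Je_1$, and $\nabla J=0$ give $\nabla_{e_1}e_2=J\nabla_{e_1}e_1=0$, $\nabla_{e_2}e_1=-[e_1,e_2]$, $\nabla_{e_2}e_2=-J[e_1,e_2]$, all tangent to $L$ since brackets of tangent fields are tangent and $TL$ is $J$-invariant; hence the second fundamental form vanishes. The round metric of radius $1/\sqrt{2}$ then follows from the Gauss equation together with $R(\gamma',J\gamma',J\gamma',\gamma')\equiv 2$ from Lemma \ref{Lem: curvature rigidity}, completeness, and simple connectedness (Van Kampen applied to the two exponential charts), with no explicit Jacobi-field computation needed.

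There is also a secondary gap in your metric computation: writing $\Psi_*\partial_\theta(s)=\tfrac{1}{\sqrt{2}}\sin(\sqrt{2}s)\cdot(\text{parallel transport of }J\bar\gamma_\theta'(0))$ requires that $J\gamma'$ be a parallel eigenvector of the Jacobi operator, i.e.\ $R(J\gamma',\gamma')\gamma'=2\,J\gamma'$ as a vector. The identity $\HSC(e_\gamma)\equiv 2$ only controls the component along $J\gamma'$; the mixed components $R(J\gamma',\gamma',\gamma',X)$ for $X\perp\{\gamma',J\gamma'\}$ must be killed by using that $e_\gamma$ \emph{minimizes} the holomorphic sectional curvature (Lemma \ref{Lem: curvature rigidity} and Lemma \ref{partial BK}), and ``up to the normalization fixed by $\HSC\equiv 2$'' does not supply this; note also that the Jacobi field recorded in Lemma \ref{Lem: rotation} is $\frac{\sin s}{\sin\kappa}J\bar\gamma_r'$, not your expression, so citing that identification does not literally give your formula. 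In the paper this issue never arises for the present lemma, because the intrinsic metric is deduced after total geodesy via the Gauss equation rather than from the Jacobi fields.
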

\begin{proof}
    Let $D_{\pi/\sqrt{2},p}$ denote the open tangent disk with radius $\pi/\sqrt{2}$ lying in the tangent plane in $T_pM$ generated by $\bar v(0)$ and $J\bar v(0)$. From Lemma \ref{Lem: rotation}, we know that the map
    $$\exp_p:D_{\pi/\sqrt{2},p}\to L\setminus\{q\}$$ is a homeomorphism. Interchanging $p$ and $q$, there is an open tangent disk $D_{\pi/\sqrt{2},q}\subseteq T_qM$ with radius $\pi/\sqrt{2}$ such that $\exp_q:D_{\pi/\sqrt{2},q}\to L\setminus\{p\}$ is also a homeomorphism. These maps induce a natural smooth structure on $L$, which makes $L$ an (real) embedded surface in $(M^n,g)$.

We now show that $L$ is totally-geodesic. From the smoothness it suffices to work on $L\setminus\{p,q\}$. From the proof of Lemma \ref{Lem: rotation} there is an orthonormal frame $\{e_1,e_2\}$ on $L\setminus\{p,q\}$ given by $e_1=\bar\gamma_r'$ and $e_2=Je_1$. Using the facts $\nabla_{e_1}e_1=0$ and $\nabla J=0$ as well as the symmetry of the Levi-Civita connection, we obtain
\[
\nabla_{e_1}e_2 = 0, \ \ \
\nabla_{e_2}e_1 = -[e_{1},e_{2}], \ \ \
\nabla_{e_2}e_2 = J\nabla_{e_2}e_1 = -J[e_{1},e_{2}].
\]
Then for any vector fields $X=f_1e_1+f_2e_2$ and $Y=g_1e_1+g_2e_2$ on $L$, we compute
\[
\begin{split}\nabla_XY
= {} & X(g_1)e_1+X(g_2)e_2+g_1\nabla_X e_1+g_2\nabla_X e_2 \\
= {} & X(g_1)e_1+X(g_2)e_2
+g_1f_1\nabla_{e_1} e_1 +g_1 f_2 \nabla_{e_2}e_1+ g_2f_1\nabla_{e_1}e_2+g_2f_2\nabla_{e_2}e_2 \\
= {} & X(g_1)e_1+X(g_2)e_2 -g_1 f_2 [e_1,e_2]-g_2f_2J [e_1,e_2].
\end{split}
\]
This shows that the right hand side belongs to $TL$ and thus $L$ is totally geodesic.

Notice that $L$ is simply-connected from the Van-Kampen theorem and has Ricci curvature constantly equals to $2$ from the Gauss equation and Lemma \ref{Lem: curvature rigidity}. Therefore, $L$ is isometric to the sphere with radius $1/\sqrt{2}$ as a Riemannian manifold. Clearly, $p$ and $q$ are a pair of antipodal points on $L$.
\end{proof}

\begin{lemma}\label{Lem: CP1 structure}
    $L$ is a complex curve in $(M,J)$ biholomorphic to $\CP^{1}$.
\end{lemma}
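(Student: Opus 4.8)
The plan is to show that the already-constructed totally geodesic surface $L$, isometric to the round sphere of radius $1/\sqrt{2}$, is in fact a complex submanifold of $(M,J)$, and then to identify it with $\CP^1$. The natural idea is to prove that the tangent plane $T_xL$ is $J$-invariant at every point $x\in L$. Away from the two antipodal points $p,q$ we already have an explicit orthonormal frame $\{e_1,e_2\}$ of $TL$ with $e_1=\bar\gamma_r'$ and $e_2=Je_1$ from the proof of Lemma~\ref{Lem: rotation}; hence $J(T_xL)=T_xL$ for all $x\in L\setminus\{p,q\}$, i.e.\ $L\setminus\{p,q\}$ is a complex curve. The first step is therefore to upgrade this to all of $L$: since the condition ``$T_xL$ is $J$-invariant'' is a closed condition and $L$ is an embedded surface, and since $\{p,q\}$ has empty interior in $L$, the $J$-invariance extends by continuity to $p$ and $q$. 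Thus $L$ is an embedded almost-complex submanifold of the complex manifold $(M,J)$, and by the Newlander--Nirenberg/integrability argument (an almost-complex submanifold of a complex manifold is automatically a complex submanifold, since the integrability tensor restricts trivially) $L$ is a genuine complex curve.

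Once $L$ is known to be a compact complex curve, the second step is to determine its biholomorphism type. By Lemma~\ref{Lem: sphere metric}, $L$ is diffeomorphic to $S^2$, hence a compact Riemann surface of genus $0$, and every such surface is biholomorphic to $\CP^1$ by the uniformization theorem. This identifies $L$ biholomorphically with $\CP^1$ and completes the proof.

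Alternatively, and perhaps more in the spirit of the metric constructions here, one can run the argument purely on $L\setminus\{p,q\}$ first: the frame $\{e_1,e_2=Je_1\}$ shows $J$ preserves $TL$ there, so $L\setminus\{p,q\}$ is a complex curve; then invoke the removable singularity / extension of the complex structure across the two points using the smooth structure already produced via $\exp_p$ and $\exp_q$ in Lemma~\ref{Lem: sphere metric}, noting that near $p$ the map $\exp_p$ conjugates $J|_{T_pM}$ restricted to the plane $\mathrm{span}\{\bar v(0),J\bar v(0)\}$ to the complex structure on $L$, so the complex structure extends smoothly across $p$ (and similarly $q$). Either way one lands on a compact genus-zero Riemann surface, hence $\CP^1$.

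The main obstacle I expect is the behaviour at the two antipodal points $p$ and $q$: the explicit frame $e_1=\bar\gamma_r'$, $e_2=Je_1$ degenerates there (the geodesics $\bar\gamma_r$ all emanate from $p$), so one cannot directly read off $J$-invariance of $T_pL$ from that frame, and one must argue by a limiting/continuity argument or by analyzing the $\exp_p$-coordinates carefully. This is a mild point — continuity of $J$ together with the fact that $L$ is a smooth embedded surface and $J$-invariance holds on the dense open set $L\setminus\{p,q\}$ settles it — but it is the only place where the argument is not completely formal. The rest (integrability being automatic for almost-complex submanifolds, and genus-zero $\Rightarrow \CP^1$) is standard.
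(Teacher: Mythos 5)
Your proposal is correct and follows essentially the same route as the paper: the induced $J$ makes $L$ a complex curve (the $J$-invariance of $TL$ coming from the frame $e_1,e_2=Je_1$, extended across $p,q$), and then the compact, simply-connected (genus-zero) Riemann surface is $\CP^1$ by uniformization. Your extra care about the degeneration of the frame at $p,q$ and about integrability is just a more detailed spelling-out of what the paper treats as immediate.
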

\begin{proof}
    Clearly, $J$ induced from $M$ is a complex structure on $L$ and so $L$ is a complex curve. The simply-connectedness yields that $L$ is biholomorphic to $\CP^1$.
\end{proof}

Now  Proposition~\ref{totally gedesic CP1} follows.
\begin{proof}[Proof of Proposition \ref{totally gedesic CP1}]
    It follows from Lemma \ref{Lem: sphere metric} and Lemma \ref{Lem: CP1 structure}.
\end{proof}

\section{Proof of main theorems}\label{Sec: main theorems}
In this section, we will prove our main results. We first consider the diameter rigidity on $\mathbb{CP}^{n}$ under an additional condition on the volume, i.e. Theorem~\ref{Thm: main1}.

\begin{proof}[Proof of Theorem \ref{Thm: main1}]
The proof is based on combining the method in \cite{LY18}  with our construction of extreme $\CP^1$ from Proposition~\ref{totally gedesic CP1}. It follows from Lemma \ref{Lem: curvature rigidity} that
    $$\diam(\CP^n,\omega)\leq \frac{\pi}{\sqrt 2}.$$
When the equality holds, there exist two points $p$ and $q$ which are connected by a minimizing geodesic with length $\pi/\sqrt 2$. It follows from Proposition \ref{totally gedesic CP1} that there is a holomorphically embedded $\CP^1\subset \CP^n$ whose area is equal to
\[
\vol(\mathbb{S}^{2},g_{\mathbb{S}^{2}}/2) = \vol(\CP^1,\omega_{\CP^1}).
\]
Let $k$ denote its degree, where $k$ is a positive integer. Then we know $[\omega]=k^{-1}[\omega_{\CP^n}]$ and so
    $$\vol(\CP^n,\omega)=k^{-n}\cdot\vol(\CP^n,\omega_{\CP^n}).$$
    From our assumption, we see $k=1$ and so
    $$[\omega]=[\omega_{\CP^n}].$$
Combining with the well-known fact
    $$[\Ric(\omega)]=c_1(\CP^n)=(n+1)[\omega_{\CP^n}],$$
    we compute
    \[
\int_{\mathbb{CP}^n}\mathrm{R}\cdot\omega^{n}
= n\int_{\mathbb{CP}^n}\Ric(\omega)\wedge\omega^{n-1}
= n(n+1)\cdot\vol(\mathbb{CP}^n,\omega_{\mathbb{CP}^n}).
\]
It follows from Lemma \ref{HSC R} that the holomorphic sectional curvature lower bound $\HSC\geq 2$ implies the scalar curvature lower bound $\mathrm{R}\geq n(n+1)$, and so we have $\mathrm{R}\equiv n(n+1)\mbox{ and }\HSC\equiv 2$.
As a consequence, $(\mathbb{CP}^n,\omega)$ is biholomorphically isometric to $(\mathbb{CP}^n,\omega_{\mathbb{CP}^n})$.
\end{proof}

\begin{proof}[Proof of Theorem \ref{Thm: main2}]
It follows from Lemma \ref{Lem: curvature rigidity} that
    $$d(p,q)=\frac{\pi}{\sqrt 2}\mbox{ for any }q\in Q.$$
In particular, there is a minimizing geodesic connecting $p$ and each $q\in Q$ with length $\pi/\sqrt 2$. It follows from Proposition \ref{totally gedesic CP1} that there is an embedded surface $L_q$ consisting of minimizing geodesic connecting $p$ and $q$. Due to $d_g(p,q)=d_g(p,Q)$, these geodesics are also minimizing geodesics connecting $p$ and $Q$. In particular, we have the orthogonal decomposition
    \begin{equation}\label{Eq: tangent space decomposition}
        T_qM=T_qQ\oplus T_qL_q.
    \end{equation}

    We claim that any point $z\in M$ outside $\{p\}\cup Q$ must lie in a minimizing geodesic connecting $p$ and $Q$. To see this, we take $\gamma_z:[0,\ell]\to (M,\omega)$ to be a minimizing geodesic connecting $z$ and $Q$. Clearly, it determines a vector $\gamma_z'(\ell)$ which is orthogonal to $T_{\gamma_z(\ell)}Q$ at $\gamma_z(\ell)$. From the decomposition \eqref{Eq: tangent space decomposition}, we can find a minimizing geodesic on $L_{\gamma_z(\ell)}$ connecting $p$ and $\gamma_z(\ell)$, which has the speed $\gamma_z'(\ell)$ at $\gamma_z(\ell)$. The claim follows from the uniqueness of geodesics.

    Consider the distance functions
    \[
r_{p}(x) = d_g(x,p) \mbox{ and } r_{Q}(x) = d_g(x,Q).
\]
From previous discussion we conclude
\[
\mathrm{Cut}(p) = Q, \ \ \mathrm{Cut}(Q) = \{p\},
\]
and
\begin{equation}\label{Eq: constant sum}
    r_{p} + r_{Q} \equiv \frac{\pi}{\sqrt{2}} \mbox{ in }M.
\end{equation}
On the other hand, it follows from Theorem \ref{Tam-Yu comparison inequality} and Lemma \ref{Lem: curvature rigidity} that we have
\begin{equation}\label{comparison rp}
\begin{split}
(r_{p})_{\alpha\ov{\beta}} \leq {} & \frac{1}{\sqrt{2}}\cot\left(\frac{r_{p}}{\sqrt{2}}\right) g_{\alpha\ov{\beta}} \\[1.6mm]
& +\sqrt{2}\left(\cot(\sqrt{2}r_{p})-\cot\left(\frac{r_{p}}{\sqrt{2}}\right)\right)(r_{p})_{\alpha}(r_{p})_{\ov{\beta}}
\end{split}
\end{equation}
and
\begin{equation}\label{comparison rQ}
\begin{split}
(r_{Q})_{\alpha\ov{\beta}} \leq {} & \frac{1}{\sqrt{2}}\cot\left(\frac{r_{Q}}{\sqrt{2}}\right)(g_{\alpha\ov{\beta}}-g_{\alpha\ov{\beta}}^{Q}) \\[1.6mm]
& +\sqrt{2}\left(\cot(\sqrt{2}r_{Q})-\cot\left(\frac{r_{Q}}{\sqrt{2}}\right)\right)(r_{Q})_{\alpha}(r_{Q})_{\ov{\beta}} \\
& -\frac{1}{\sqrt{2}}\tan\left(\frac{r_{Q}}{\sqrt{2}}\right)g_{\alpha\ov{\beta}}^{Q}
\end{split}
\end{equation}
in $M\setminus(\{p\}\cup Q)$. In particular,
\begin{equation}\label{comparison-combine}
\begin{split}
    g^{\alpha\bar\beta}(r_p+r_Q)_{\alpha\bar\beta}\leq \frac{n-1}{\sqrt 2}&\cot\left(\frac{r_p}{\sqrt 2}\right)+\frac{1}{\sqrt 2}\cot\left(\sqrt 2r_p\right)\\[1.6mm]
    &+\frac{1}{\sqrt 2}\cot\left(\sqrt 2r_Q\right)-\frac{n-1}{\sqrt 2}\tan\left(\frac{r_Q}{\sqrt 2}\right).
\end{split}
\end{equation}
Combining \eqref{comparison-combine} with \eqref{Eq: constant sum}, we obtain
$$0=g^{\alpha\bar\beta}(r_p+r_Q)_{\alpha\bar\beta}\leq 0.$$
This shows that both \eqref{comparison rp} and \eqref{comparison rQ} are equalities.  Now it follows from Theorem \ref{Tam-Yu comparison equality} with $S=\{p\}$ that $$M\setminus Q=B_{\pi/\sqrt{2}}(p)$$ is holomorphically isometric to the geodesic $(\pi/\sqrt{2})$-ball  in $(\mathbb{CP}^{n},\omega_{\CP^n})$. Since $\dim_{\mathbb{C}}Q=n-1$, we have $M=\overline{B_{\pi/\sqrt 2}(p)}$ and so $(M,\omega)$ has $\HSC\equiv2$, yielding that $(M,\omega)$ is biholomorphically isometric to $(\mathbb{CP}^n,\omega_{\mathbb{CP}^n})$.
\end{proof}

\appendix

\section{Preliminary results}

In this section, we collect some preliminary results from previous works.

\subsection{Holomorphic sectional curvature}
\begin{lemma}\label{HSC R}
Let $(M,\omega)$ be a compact $n$-dimensional K\"ahler manifold with $\HSC\geq 2$ at $p\in M$. Then the scalar curvature satisfies $\mathrm{R}\geq n(n+1)$ at $p$, where the equality holds if and only if $\HSC\equiv 2$ at $p$.
\end{lemma}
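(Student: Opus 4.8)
The plan is to reduce this pointwise inequality to the classical averaging identity expressing the Chern scalar curvature at a point as a fixed positive multiple of the mean value of the holomorphic sectional curvature over the unit sphere of the holomorphic tangent space, and then read off both the bound and the rigidity statement from positivity of the integrand.

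First I would fix $p$ and choose complex normal coordinates at $p$, so that $g_{i\bar j}(p)=\delta_{ij}$; writing the curvature components as $R_{i\bar jk\bar l}$, one has $\HSC(u)=\sum_{i,j,k,l}R_{i\bar jk\bar l}u^i\bar u^ju^k\bar u^l$ for a unit $(1,0)$-vector $u=u^i\partial_i$, while the Chern scalar curvature used throughout the paper is $\mathrm{R}=\mathrm{tr}_g\Ric=\sum_{i,k}R_{i\bar ik\bar k}$ at $p$. Averaging $\HSC$ over the unit sphere $S^{2n-1}\subset\C^n$ and inserting the elementary fourth–moment identity
\[
\fint_{S^{2n-1}}u^i\bar u^ju^k\bar u^l\,d\sigma=\frac{1}{n(n+1)}\bigl(\delta_{ij}\delta_{kl}+\delta_{il}\delta_{kj}\bigr),
\]
in which the constant is pinned down by contracting $j=i$, $l=k$ and summing, together with the K\"ahler symmetry $R_{i\bar jk\bar l}=R_{i\bar lk\bar j}$ (which gives $R_{i\bar kk\bar i}=R_{i\bar ik\bar k}$), yields
\[
\fint_{S^{2n-1}}\HSC(u)\,d\sigma=\frac{1}{n(n+1)}\sum_{i,k}\bigl(R_{i\bar ik\bar k}+R_{i\bar kk\bar i}\bigr)=\frac{2\,\mathrm{R}}{n(n+1)}.
\]
As a consistency check, on $(\CP^n,\omega_{\CP^n})$ one has $\HSC\equiv 2$ and $\mathrm{R}=n(n+1)$, which matches this formula.

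Second, I would just integrate the hypothesis: since $\HSC(u)\ge 2$ for every unit $(1,0)$-vector $u$ at $p$, averaging over $S^{2n-1}$ and using the displayed identity gives $\tfrac{2\mathrm{R}}{n(n+1)}\ge 2$, that is, $\mathrm{R}\ge n(n+1)$ at $p$. For the equality statement, note that $u\mapsto\HSC(u)-2$ is a continuous, nonnegative function on the compact sphere $S^{2n-1}$; the equality $\mathrm{R}=n(n+1)$ is equivalent to its mean value being zero, hence to its identical vanishing, i.e.\ to $\HSC\equiv 2$ at $p$, and the converse direction is immediate from the same formula.

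The only point requiring care is bookkeeping with normalizations: one must use the Chern (K\"ahler) scalar curvature $\mathrm{R}=\mathrm{tr}_g\Ric$, consistent with the relation $\mathrm{R}\,\omega^n=n\,\Ric(\omega)\wedge\omega^{n-1}$ used in the proof of Theorem~\ref{Thm: main1}, since this is what produces the constant $\tfrac{n(n+1)}{2}$ and makes the endpoint $(\CP^n,\omega_{\CP^n})$ consistent, and one must invoke the full set of K\"ahler curvature symmetries to identify the two contracted terms. Proving the fourth–moment identity on the sphere is elementary and I would not spell it out in detail; there is no genuine obstacle beyond this bookkeeping.
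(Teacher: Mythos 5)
Your proposal is correct and follows essentially the same route as the paper, which proves the lemma directly from Berger's averaging identity $\mathrm{R}(p)=\frac{n(n+1)}{2}\fint_{|u|=1}\HSC(u)\,\mathrm d\theta(u)$; you merely supply the fourth-moment computation on $S^{2n-1}$ and the K\"ahler symmetry $R_{i\bar kk\bar i}=R_{i\bar ik\bar k}$ that the paper leaves implicit, and your normalization (Chern scalar curvature $\mathrm{R}=\mathrm{tr}_g\Ric$) matches the one used in the proof of Theorem \ref{Thm: main1}.
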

\begin{proof}
This follows from Berger's averaging trick:
\[
\mathrm{R}(p) = \frac{n(n+1)}{2}\fint_{u\in T_{p}^{1,0}M,\,|u|=1}\HSC(u)\, \mathrm d\theta(u).
\]
\end{proof}

\begin{lemma}[Yang \cite{Yang17}]\label{partial BK}
Let $(M,\omega)$ be a K\"ahler manifold and $p\in M$. Suppose that $e_1$ is a unitary $(1,0)$-vector at $p$ minimizing the holomorphic sectional curvature, i.e.
\[
\mathrm{HSC}(e_{1}) = \min_{u\in T^{1,0}_pM,\,|u|=1}\mathrm{HSC}(u).
\]
Then for any unitary $(1,0)$-vector $u$ at $p$, we have
\[
\mathrm{BK}(e_{1},u)
\geq \frac{\HSC(e_{1})}{2} \cdot (1+|\langle e_{1},\ov{u}\rangle|^{2}).
\]
\end{lemma}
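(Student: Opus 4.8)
The plan is to exploit that $e_1$ is a minimum point of the smooth function $H(u):=R(u,\bar u,u,\bar u)$ on the unit sphere $S\subset T_p^{1,0}M$: both the first variation of $H$ vanishes at $e_1$ and the Hessian of $H$ along $S$ is positive semidefinite there. The vanishing first variation will kill a cross term, and the second variation will produce the bisectional bound. Throughout I use that $R$ is $\C$-linear in each holomorphic slot, conjugate-linear in each anti-holomorphic slot, and obeys the Kähler symmetries $R(a,\bar b,c,\bar d)=R(c,\bar b,a,\bar d)=R(a,\bar d,c,\bar b)$ together with $\overline{R(a,\bar b,c,\bar d)}=R(b,\bar a,d,\bar c)$.

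First I would dispose of the case $Y\perp e_1$, $|Y|_g=1$. Along the unit-vector curve $u(t)=(e_1+tY)/|e_1+tY|_g$ with $t\in\mathbb R$ one has $H(u(t))=(1+t^2)^{-2}H(e_1+tY)$, so minimality of $e_1$ gives $H(e_1+tY)\ge H(e_1)(1+t^2)^2$ for small $t$, with equality at $t=0$. Expanding $H(e_1+tY)$ as a degree-$4$ polynomial in $t$: vanishing of the $t^1$-coefficient, applied to $Y$ and to $iY$, forces $R(e_1,\bar e_1,e_1,\bar Y)=0$; nonnegativity of the second derivative at $t=0$ gives $4\,\BK(e_1,Y)+2\,\mathrm{Re}\,R(Y,\bar e_1,Y,\bar e_1)\ge 2H(e_1)$, and running this once more with $iY$ in place of $Y$ flips the sign of the stray term $\mathrm{Re}\,R(Y,\bar e_1,Y,\bar e_1)$ while leaving $\BK(e_1,Y)$ fixed, so averaging the two inequalities yields $\BK(e_1,Y)\ge\tfrac12 H(e_1)$.

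For a general unit $(1,0)$-vector $u$ I would write $u=\lambda e_1+\mu Y$ with $Y\perp e_1$, $|Y|_g=1$, $|\lambda|^2+|\mu|^2=1$, and note $|\lambda|=|\langle e_1,\bar u\rangle|$. Expanding $\BK(e_1,u)=R(e_1,\bar e_1,u,\bar u)$ and using $R(e_1,\bar e_1,e_1,\bar Y)=0$ together with its conjugate $R(e_1,\bar e_1,Y,\bar e_1)=0$ from the previous step, all cross terms vanish and $\BK(e_1,u)=|\lambda|^2H(e_1)+|\mu|^2\BK(e_1,Y)$. Since $|\mu|^2\ge 0$ and $\BK(e_1,Y)\ge\tfrac12 H(e_1)$, this is $\ge|\lambda|^2H(e_1)+\tfrac12|\mu|^2H(e_1)=\tfrac12 H(e_1)(1+|\lambda|^2)$, which is exactly the asserted inequality; note that no sign assumption on $H(e_1)$ is needed.

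The only real obstacle is the bookkeeping in the degree-$4$ Taylor expansion of $H(e_1+tY)$: of the sixteen terms one must collapse the relevant ones via the symmetries above, correctly identify the $t^2$-coefficient as $4\,\BK(e_1,Y)+2\,\mathrm{Re}\,R(Y,\bar e_1,Y,\bar e_1)$, and observe that the off-diagonal piece is annihilated precisely by the $Y\mapsto iY$ substitution. Equivalently, one may package the first two steps into a single Berger-type polarization identity by averaging $H(aX+\zeta bY)$ over the fourth roots of unity $\zeta$, which automatically kills the odd-order terms and the off-diagonal quadratic terms.
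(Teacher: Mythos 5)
Your argument is correct. Note that the paper itself does not prove this lemma---it only cites \cite[Lemma 4.1]{Yang17}---so your proposal supplies the argument behind the citation, and it is essentially the standard variational proof of Yang's lemma. Your bookkeeping checks out: for unit $Y\perp e_1$ the inequality $H(e_1+tY)\ge H(e_1)(1+t^2)^2$ holds for all real $t$ with equality at $t=0$ by homogeneity of degree $4$; the $t^1$-coefficient is $4\,\mathrm{Re}\,R(e_1,\bar e_1,e_1,\bar Y)$, so testing with $Y$ and $iY$ indeed gives $R(e_1,\bar e_1,e_1,\bar Y)=0$; the $t^2$-coefficient is $4\,\BK(e_1,Y)+2\,\mathrm{Re}\,R(Y,\bar e_1,Y,\bar e_1)$ against $2H(e_1)$ coming from $(1+t^2)^2$, and the substitution $Y\mapsto iY$ fixes $\BK(e_1,Y)$ while flipping the sign of $\mathrm{Re}\,R(Y,\bar e_1,Y,\bar e_1)$, so averaging gives $\BK(e_1,Y)\ge \tfrac12 H(e_1)$. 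In the final step the first-order identity kills both cross terms, since $R(e_1,\bar e_1,Y,\bar e_1)=\overline{R(e_1,\bar e_1,e_1,\bar Y)}$, yielding $\BK(e_1,u)=|\lambda|^2H(e_1)+|\mu|^2\BK(e_1,Y)$, and you are right that no sign assumption on $H(e_1)$ is needed because the only inequality used is multiplied by $|\mu|^2\ge 0$. The fourth-roots-of-unity averaging you mention is just a compact repackaging of the same computation.
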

\begin{proof}
This is {\cite[Lemma 4.1]{Yang17}}.
\end{proof}

\subsection{Bisectional curvature}
In this subsection, we collect the complex Hessian comparison theorem proved by Tam-Yu \cite{TY12}. Although the original theorems were stated under the assumption $\BK\geq K$ (we shall take $K=1$ for our use), one can check directly that the arguments still work under the weaker assumption \eqref{radial BK geq K} below.
\begin{theorem}[Tam-Yu \cite{TY12}]\label{Tam-Yu comparison inequality}
Let $(M,\omega)$ be a compact $n$-dimensional K\"ahler manifold and $S$ be a closed complex submanifold. Denote the cut-locus of $S$ by $\mathrm{Cut}(S)$. Set
\[
r(x) = d_g(x,S), \ \ e_{1} = \frac{1}{\sqrt{2}}(\nabla r-\sqrt{-1}J\nabla r).
\]
Suppose that
\begin{equation}\label{radial BK geq K}
\mathrm{BK}(e_{1},u) \geq 1+|\langle e_{1},\ov{u}\rangle|^{2}
\end{equation}
holds for any point $x\in M\setminus \mathrm{Cut}(S)$ and unitary $(1,0)$-vector $u$.
Then we have
\begin{equation}\label{Tam-Yu comparison inequality eqn}
r_{\alpha\ov{\beta}} \leq F(g_{\alpha\ov{\beta}}-g_{\alpha\ov{\beta}}^{S})+Gr_{\alpha}r_{\ov{\beta}}+Hg_{\alpha\ov{\beta}}^{S},
\end{equation}
in $M\setminus \mathrm{Cut}(S)$,
where $g^{S}$ denotes the metric of $S$ parallel transported along the geodesics orthogonal to $S$,
\[
F=\frac{1}{\sqrt 2}\cot\left(\frac{r}{\sqrt 2}\right),\ \ G=\sqrt 2\left(\cot(\sqrt 2r)-\cot\left(\frac{r}{\sqrt 2}\right)\right),
\]
and
\[
H=-\frac{1}{\sqrt 2}\tan\left(\frac{r}{\sqrt 2}\right).
\]
\end{theorem}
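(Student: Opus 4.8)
The final statement to prove is Theorem \ref{Tam-Yu comparison inequality}, the Tam--Yu complex Hessian comparison under the weakened radial curvature hypothesis \eqref{radial BK geq K}.

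The plan is to follow the original Tam--Yu argument \cite{TY12}, but keeping careful track of exactly which Jacobi fields the curvature bound \eqref{radial BK geq K} must control, so that only the \emph{radial} bisectional curvature $\mathrm{BK}(e_1,\cdot)$ — rather than the full condition $\mathrm{BK}\geq 1$ — is ever invoked. First I would fix a point $x\in M\setminus\mathrm{Cut}(S)$, let $\gamma:[0,r]\to M$ be the unit-speed minimizing geodesic from $S$ to $x$ realizing $r(x)=d_g(x,S)$, and work with the index form along $\gamma$. The key object is the complex Hessian $r_{\alpha\ov\beta}=\nabla^2 r(\,\cdot\,,\,\cdot\,)$ expressed on $(1,0)$-vectors; using $|\nabla r|\equiv 1$ and the second variation, $r_{\alpha\ov\beta}$ is computed as the index form of suitable variation (Jacobi) fields that vanish at the $S$-endpoint. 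One decomposes an arbitrary unit $(1,0)$-vector $u$ at $x$ relative to $e_1=\tfrac{1}{\sqrt2}(\nabla r-\sqrt{-1}J\nabla r)$, and the relevant comparison model is the Kähler space form $(\mathbb{CP}^n,\omega_{\mathbb{CP}^n})$ (suitably normalized so $\mathrm{HSC}=2$), whose distance function to a totally geodesic $\mathbb{CP}^{n-1}$ gives exactly the functions $F$, $G$, $H$. The directions split into: the radial direction (contributing $G r_\alpha r_{\ov\beta}$), the $J$-image of the radial direction and its conjugate partner (where the sharp constant $2$ in the holomorphic sectional curvature is what produces the $\cot(\sqrt2 r)$ term), the tangential-to-$S$ directions (contributing the $Hg^S_{\alpha\ov\beta}$ term via the totally geodesic initial condition), and the generic directions orthogonal to all of these (contributing the $F(g_{\alpha\ov\beta}-g^S_{\alpha\ov\beta})$ term). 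In each block one compares the index form of the true Jacobi field with that of the explicit model Jacobi field, invoking \eqref{radial BK geq K} precisely in the estimate $R(\gamma',V,V,\gamma')=\mathrm{BK}(e_1,\,\cdot\,)$-type terms $\geq$ (model curvature).

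The key steps, in order: (1) set up the index form expression for $r_{\alpha\ov\beta}$ along $\gamma$ using $|\nabla r|=1$, the formula $\nabla^2 r(V,W)=\langle \nabla_V \nabla r, W\rangle$, and the fact that $\nabla r$ restricted to the normal exponential image of $S$ is the gradient of the Riemannian distance, so its Hessian is governed by Riccati/Jacobi comparison; (2) complexify: write $r_{\alpha\ov\beta}$ in terms of $\nabla^2 r(e_\alpha,\ov{e_\beta})$ and use the Kähler identity $\nabla J=0$ to relate $\nabla^2 r(JV,JW)$ to $\nabla^2 r(V,W)$, which is what brings bisectional (rather than merely sectional) curvature into play; (3) build the comparison (model) Jacobi fields in the space form explicitly — one gets $\sin/\cos$ and $\sin/\cos$ of $\tfrac{r}{\sqrt2}$ and of $\sqrt2 r$ from the two distinct eigenvalues of the curvature operator $R(\,\cdot\,,\gamma')\gamma'$ in the Kähler space form, namely the $J\gamma'$-eigenvalue $2$ and the orthogonal-complement eigenvalue $\tfrac12$; (4) apply the index lemma (Rauch-type comparison) blockwise, using \eqref{radial BK geq K} to dominate $R(\gamma',V,V,\gamma')$ from below by the model value in each block, thereby dominating each index form and hence $r_{\alpha\ov\beta}$ from above by the claimed right-hand side; (5) handle the $g^S$ terms by checking the boundary condition at the $S$-endpoint: tangential-to-$S$ Jacobi fields start with a prescribed nonzero value governed by the second fundamental form of $S$, which vanishes because $S$ is complex hence minimal — more precisely Tam--Yu use that the relevant mean-curvature-type contribution is controlled, giving the $\tan$ term; then assemble the pieces into \eqref{Tam-Yu comparison inequality eqn}.

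The main obstacle I anticipate is step (4) together with the bookkeeping in step (2): one must verify that at no point does the argument secretly require $\mathrm{BK}(u,v)\geq 1+|\langle u,\ov v\rangle|^2$ for a pair $u,v$ with \emph{neither} equal to the radial direction $e_1$. The subtlety is that when one complexifies the index form and uses $\nabla J=0$, curvature terms of the form $R(\gamma', JW, JW, \gamma')$ appear, and one needs $R(\gamma',W,W,\gamma')+R(\gamma',JW,JW,\gamma')$ to be bounded below appropriately; since $\gamma'=\tfrac{1}{\sqrt2}(e_1+\ov{e_1})$ and $J\gamma'$ correspond to $e_1$ in the complexified picture, the sum $R(\gamma',W,W,\gamma')+R(\gamma',JW,JW,\gamma')$ is expressible purely through $\mathrm{BK}(e_1,\cdot)$ and $\mathrm{HSC}(e_1)$, so \eqref{radial BK geq K} does suffice — but making this transparent, and in particular getting the off-diagonal $r_\alpha r_{\ov\beta}$ coefficient $G$ sharp, is where the care is needed. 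Once this is checked, the rest is the standard Jacobi/Rauch comparison machinery, essentially verbatim from \cite{TY12}, so I would present the setup and the blockwise comparison in detail and refer to \cite{TY12} for the purely computational model-Jacobi-field identities.
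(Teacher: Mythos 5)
Your proposal is correct and follows essentially the same route as the paper: the paper proves this theorem simply by invoking the proof of \cite[Theorem 2.1]{TY12}, with the observation (made in the surrounding text) that the index-form/Jacobi-field argument only ever uses curvature terms of the form $R(\gamma',V,V,\gamma')+R(\gamma',JV,JV,\gamma')$, i.e.\ the radial bisectional curvature, so the weaker hypothesis \eqref{radial BK geq K} suffices — which is precisely the bookkeeping you carry out in steps (2)--(5).
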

\begin{proof}
    This follows from the proof of  \cite[Theorem 2.1]{TY12}.
\end{proof}

\begin{theorem}[Tam-Yu \cite{TY12}]\label{Tam-Yu comparison equality}
If $S=\{p\}$ is a point and \eqref{Tam-Yu comparison inequality eqn} attains equality in Theorem \ref{Tam-Yu comparison inequality}, then any geodesic ball $B_r(p)\subset M\setminus\mathrm{Cut}(S)$ with $r\leq \pi/\sqrt 2$ is holomorphically isometric to the geodesic $r$-ball in $(\CP^n,\omega_{\CP^n})$.
\end{theorem}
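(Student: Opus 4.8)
The plan is to run the classical Jacobi-field rigidity argument for distance functions, in the Kähler-adapted form due to Tam--Yu. Since $S=\{p\}$ we have $g^{S}=0$, so the hypothesis becomes the pointwise identity
\[
r_{\alpha\ov\beta}=F\,g_{\alpha\ov\beta}+G\,r_{\alpha}r_{\ov\beta}\quad\text{on }B_{r}(p)\setminus\{p\},\qquad F=\tfrac{1}{\sqrt2}\cot\tfrac{r}{\sqrt2},\ \ G=\sqrt2\Big(\cot(\sqrt2 r)-\cot\tfrac{r}{\sqrt2}\Big).
\]
First I would set up geodesic polar coordinates centred at $p$: for each unit vector $\xi\in T_{p}M$ write $\gamma=\gamma_{\xi}(t)=\exp_{p}(t\xi)$, and along $\gamma$ use the parallel unitary frame adapted to $J$, namely $e_{1}=\tfrac{1}{\sqrt2}(\gamma'-\sqrt{-1}J\gamma')$ together with $e_{2},\dots,e_{n}$ spanning the $J$-invariant orthogonal complement of $\mathrm{span}_{\mathbb{R}}\{\gamma',J\gamma'\}$; all of these are genuinely parallel along $\gamma$ because $\gamma'$ is parallel and $\nabla J=0$.

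In this frame $r_{1}=\tfrac{1}{\sqrt2}$ and $r_{\alpha}=0$ for $\alpha\geq2$, so the identity above decouples along $\gamma$ into
\[
r_{1\ov1}=\tfrac{1}{\sqrt2}\cot(\sqrt2 r),\qquad r_{1\ov\alpha}=0\ \ (\alpha\geq2),\qquad r_{\alpha\ov\beta}=\tfrac{1}{\sqrt2}\cot\tfrac{r}{\sqrt2}\,\delta_{\alpha\beta}\ \ (\alpha,\beta\geq2).
\]
The next step is to convert these into first-order evolution equations for the metric. Writing $g=dr^{2}+g_{r}$, in the Jacobi (angular) coordinate frame one has $\partial_{r}(g_{r})_{ij}=2\,(\Hess\,r)_{ij}$, so the three identities above prescribe exactly how the $J\gamma'$-component (the ``Hopf/Reeb direction''), the $e_{\alpha}$-components, and the mixed components of $g_{r}$ evolve in $r$; these are precisely the equations satisfied by the distance sphere of radius $r$ in $(\CP^{n},\omega_{\CP^{n}})$ normalised to $\HSC\equiv2$ --- a Berger sphere whose Hopf fibre scales like $\sin(\sqrt2 r)$ and whose $\CP^{n-1}$-base scales like $\sin(r/\sqrt2)$, with the mixed terms vanishing. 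Since both scalings vanish to first order like $r$ as $r\to0$, and $\exp_{p}$ identifies a neighbourhood of $p$ with a ball in $T_{p}M$ on which $(\exp_{p})^{*}g$ has the standard Euclidean $2$-jet at the origin --- the same as that of $(\exp_{p})^{*}\omega_{\CP^{n}}$ --- uniqueness for this ODE system forces $(\exp_{p})^{*}g$ and $(\exp_{p})^{*}\omega_{\CP^{n}}$ to agree on the whole ball of radius $r$. Because the adapted frame, and hence this identification, is built out of $J$ and the radial field, $\exp_{p}$ also intertwines the two complex structures, so $B_{r}(p)$ is holomorphically isometric to the geodesic $r$-ball in $(\CP^{n},\omega_{\CP^{n}})$.

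The delicate point --- and the place where the Kähler condition, not merely the bisectional bound, genuinely enters --- is the passage from the single Hermitian identity for $r_{\alpha\ov\beta}$ to full control of the real metric $g_{r}$: a priori $r_{\alpha\ov\beta}$ records only the $J$-invariant part of $\Hess\,r$, so one must show that in the equality case the remaining components of $\Hess\,r$ are also pinned down and that the Jacobi equation along $\gamma$ really does decouple into the scalar equations above. I expect this to be the main obstacle; it is handled by complexifying the Jacobi fields vanishing at $p$ and exploiting the Kähler curvature symmetries together with equality in the underlying Riccati/Jacobi comparison along $\gamma$, which is exactly where the radial bisectional bound \eqref{radial BK geq K} is used. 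Once this decoupling is in place, the rest is the routine ODE-comparison and initial-value matching sketched above, and tracing through the argument of \cite[Theorem 2.1]{TY12} shows that these steps go through verbatim under the present hypotheses.
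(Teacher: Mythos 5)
Your overall outline follows the standard rigidity mechanism (and the one behind Tam--Yu's Theorem 2.2, to whose proof the paper simply defers), but as written it has a genuine gap, and it is exactly the one you flag yourself. The equality in \eqref{Tam-Yu comparison inequality eqn} is a statement about the complex Hessian only, i.e.\ about the $J$-invariant part $\tfrac12\bigl(\Hess r(W,W)+\Hess r(JW,JW)\bigr)$ of the real Hessian. Pointwise this genuinely underdetermines $\Hess r$: any $J$-anti-invariant symmetric $2$-tensor (e.g.\ the real part of a $(2,0)$-form, which exists in abundance) is invisible to $r_{\alpha\ov\beta}$. Consequently your central step --- reading the three identities for $r_{1\ov 1},r_{1\ov\alpha},r_{\alpha\ov\beta}$ as a first-order system $\partial_r g_r=2\Hess r$ for the full polar metric and invoking ODE uniqueness against the Berger-sphere model --- does not close: the system you can actually write down from the hypothesis prescribes only part of $\Hess r$, so uniqueness cannot be invoked. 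Your proposed fix ("complexifying the Jacobi fields \dots exploiting equality in the underlying Riccati/Jacobi comparison") names the right circle of ideas but is not carried out, and "tracing through \cite[Theorem 2.1]{TY12} verbatim" is an appeal to the very argument you are supposed to supply.

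The correct way to fill the hole is variational rather than pointwise: one must return to the index-form proof of the inequality. For a unit $W\perp\gamma'$ at $\gamma(r_0)$ one has $\Hess r(W,W)=I(J_W,J_W)\le I(V,V)$ by the index lemma, where $J_W$ is the Jacobi field vanishing at $p$ with value $W$ and $V$ is the explicit comparison field ($\sin(\sqrt2 t)$ times parallel $J\gamma'$ in the complex radial direction, $\sin(t/\sqrt2)$ times a parallel field in the directions orthogonal to $\gamma',J\gamma'$); the hypothesis \eqref{radial BK geq K} bounds the sum of the curvature terms for the pair $(V_1,V_2)=(fE,fJE)$. Equality in \eqref{Tam-Yu comparison inequality eqn} for all directions forces equality in each index-lemma step, hence the comparison fields are themselves Jacobi fields along every radial geodesic, and the radial bisectional curvatures equal the model values. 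This gives an explicit basis of the Jacobi fields vanishing at $p$, hence determines $d\exp_p$ and therefore the \emph{full} real metric on $B_r(p)$ (not just its $J$-invariant part), matching $(\CP^n,\omega_{\CP^n})$; holomorphy of the resulting identification then follows by the Cartan-type argument, since both complex structures are parallel and the map sends radial geodesics and parallel adapted frames to their counterparts. Your sketch of the frame computation, the equality values of $r_{1\ov1}$ and $r_{\alpha\ov\beta}$, and the final holomorphy step are fine, but without the index-lemma equality analysis the proof is incomplete.
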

\begin{proof}
    This follows from the proof of \cite[Theorem 2.2]{TY12}.
\end{proof}

\subsection{Conformal deformation}
The following concerns the relation of the Levi-Civita connection and curvature of a conformally K\"ahler metric with its reference K\"ahler metric (in its conformal class).
\begin{lemma}\label{quantity after conformal}
    Let $(M,g,J)$ be a K\"ahler manifold and $\varphi$ be a smooth function on $M$. Denote
    $$\hat g=e^{2\varphi}g.$$
    Let $\nabla$ and $\widehat\nabla$ be the Levi-Civita connection of metrics $g$ and $\hat g$ respectively.
    Then we have
    $$|\widehat \nabla J|_{\hat g}^2\leq Ce^{-2\varphi}|\nabla\varphi|^2_g,$$
    where $\widehat\nabla J$ is the $2$-tensor given by
   $$(\widehat\nabla J)(v,w)=\widehat\nabla_w(Jv)-J(\widehat\nabla_wv).$$
    We also have
    $$\widehat K(\sigma)=e^{-2\varphi}\left(K(\sigma)-\mathrm{tr}_\sigma(\nabla^2\varphi)-|\nabla_{\sigma^\bot}\varphi|^2\right),$$
   where $\sigma$ denotes a tangent plane, $\widehat K$ and $K$ denote the sectional curvature of $\hat g$ and $g$ respectively.
\end{lemma}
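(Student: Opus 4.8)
The plan is to run everything off the standard transformation rule for the Levi-Civita connection under a conformal change. Writing $\hat g=e^{2\varphi}g$, one has
\[
\widehat\nabla_X Y=\nabla_X Y+(X\varphi)Y+(Y\varphi)X-g(X,Y)\nabla\varphi
\]
for all vector fields $X,Y$. Since $(M,g,J)$ is K\"ahler, $\nabla J=0$, i.e.\ $\nabla_w(Jv)=J(\nabla_w v)$. Using the displayed formula to compute $\widehat\nabla_w(Jv)$ and $\widehat\nabla_w v$ and then subtracting $J(\widehat\nabla_w v)$ from $\widehat\nabla_w(Jv)$, the K\"ahler terms and the $(w\varphi)(Jv)$ terms cancel, leaving
\[
(\widehat\nabla J)(v,w)=\big((Jv)\varphi\big)\,w-(v\varphi)\,Jw-g(w,Jv)\,\nabla\varphi+g(v,w)\,J\nabla\varphi .
\]
Each of the four terms on the right is bounded pointwise in $g$-norm by $|\nabla\varphi|_g\,|v|_g\,|w|_g$, using that $J$ is a $g$-isometry; hence $|\widehat\nabla J|_g\le C|\nabla\varphi|_g$ as a $(1,2)$-tensor.

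It then remains to pass to the $\hat g$-norm. Since $\widehat\nabla J$ has one contravariant and two covariant slots, and $\hat g=e^{2\varphi}g$, $\hat g^{-1}=e^{-2\varphi}g^{-1}$, its squared norm rescales by $e^{2\varphi}\cdot(e^{-2\varphi})^2=e^{-2\varphi}$, so $|\widehat\nabla J|_{\hat g}^2=e^{-2\varphi}|\widehat\nabla J|_g^2\le Ce^{-2\varphi}|\nabla\varphi|_g^2$, which is the first assertion.

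For the curvature identity I would invoke the classical formula for the change of sectional curvature under a conformal rescaling (which itself follows by differentiating the connection formula above, or from the Kulkarni--Nomizu expression for the conformal transformation of the full curvature tensor): for a $g$-orthonormal pair $X,Y$ spanning a plane $\sigma$,
\[
\widehat K(\sigma)=e^{-2\varphi}\Big(K(\sigma)-(\nabla^2\varphi)(X,X)-(\nabla^2\varphi)(Y,Y)+(X\varphi)^2+(Y\varphi)^2-|\nabla\varphi|_g^2\Big).
\]
To put this in the stated form, note that $(\nabla^2\varphi)(X,X)+(\nabla^2\varphi)(Y,Y)=\mathrm{tr}_\sigma(\nabla^2\varphi)$, and that the orthogonal decomposition $\nabla\varphi=(X\varphi)X+(Y\varphi)Y+\nabla_{\sigma^\bot}\varphi$ gives $|\nabla\varphi|_g^2-(X\varphi)^2-(Y\varphi)^2=|\nabla_{\sigma^\bot}\varphi|^2$; substituting yields exactly $\widehat K(\sigma)=e^{-2\varphi}\big(K(\sigma)-\mathrm{tr}_\sigma(\nabla^2\varphi)-|\nabla_{\sigma^\bot}\varphi|^2\big)$.

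There is no genuine obstacle in this lemma: it is a routine tensorial computation. The only places that require care are keeping the sign conventions straight in the conformal curvature formula and correctly tracking the powers of $e^{\varphi}$ when converting the $(1,2)$-tensor norm of $\widehat\nabla J$ from $g$ to $\hat g$ — getting those conformal factors right is precisely what makes the two displayed bounds come out with the stated exponents.
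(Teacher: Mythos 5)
Your proof is correct, and it follows essentially the same route as the paper: the paper simply defers to ``direct computation'' (citing Lemma A.1 of \cite{CLZ25}), which is exactly the computation you carry out — the conformal transformation rule for the Levi-Civita connection applied to $\widehat\nabla J$ together with the standard conformal change formula for sectional curvature, plus the correct $e^{-2\varphi}$ rescaling of the $(1,2)$-tensor norm.
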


\begin{proof}
This follows from the direct computation, for example see the proof of \cite[Lemma A.1]{CLZ25}.
\end{proof}

\end{document}